\theoremstyle{plain}
\newtheorem{theorem}{Theorem}[section]
\newtheorem{lemma}[theorem]{Lemma}
\newtheorem{corollary}[theorem]{Corollary}
\newtheorem{definition}[theorem]{Definition}
\newtheorem{remark}[theorem]{Remark}
\newtheorem{example}[theorem]{Example}
\theoremstyle{nonumberplain}
\newtheorem{proof}{Proof}
\let\oldbibliography\thebibliography
\renewcommand{\thebibliography}[1]{%
  \oldbibliography{#1}%
  \small
  \setlength{\itemsep}{0pt}%
  \setlength{\parskip}{0pt}%
}
\numberwithin{equation}{section}
\newcommand{\itref}[1]{\eqref{#1}}
\newcommand{\Lie}[1]{\operatorname{\textsl{#1}}}
\newcommand{\lie}[1]{\operatorname{\mathfrak{#1}}}
\newcommand{\SU}{\Lie{SU}}
\newcommand{\su}{\lie{su}}
\newcommand{\GL}{\Lie{GL}}
\newcommand{\Sl}{\lie{sl}}
\newcommand{\gl}{\lie{gl}}
\newcommand{\Un}{\Lie{U}}
\newcommand{\un}{\lie{u}}
\newcommand{\g}{\lie{g}}
\newcommand{\Le}{{\mathrm{L}}}
\newcommand{\Ri}{{\mathrm{R}}}
\newcommand{\bC}{{\mathbb C}}
\newcommand{\bH}{{\mathbb H}}
\newcommand{\bR}{{\mathbb R}}
\newcommand{\CP}{\bC P}
\DeclarePairedDelimiter{\abs}{\lvert}{\rvert}
\DeclarePairedDelimiter{\norm}{\lVert}{\rVert}
\DeclareMathOperator{\Id}{Id}
\DeclareMathOperator{\Hom}{Hom}
\DeclareMathOperator{\im}{Im}
\DeclareMathOperator{\diag}{diag}
\DeclareMathOperator{\rank}{rank}
\newcommand{\hkmod}[1]{{#1}_{\textup{mod}}}
\newcommand{\Mmod}{\hkmod{M}}
\newcommand{\br}{\hspace{0pt}}
\newcommand{\bdash}{-\br} 
\begin{document}
\thispagestyle{empty}

\begin{footnotesize}
  \begin{flushright}
    IMADA-PP-2010-03\\
    CP3-ORIGINS-2010-5
  \end{flushright}
\end{footnotesize}

\bigskip

\begin{center}
  \LARGE\bfseries Non-Abelian Cut Constructions and Hyperk\"ahler
  Modifications
\end{center}
\begin{center}
  \Large Andrew Dancer and Andrew Swann
\end{center}

\bigskip
\begin{abstract}
  We discuss a general framework for cutting constructions and reinterpret
  in this setting the work on non-Abelian symplectic cuts by Weitsman.  We
  then introduce two analogous non-Abelian modification constructions for
  hyperk\"ahler manifolds: one modifies the topology significantly, the
  other gives metric deformations.  We highlight ways in which the geometry
  of moment maps for non-Abelian hyperk\"ahler actions differs from the
  Abelian case and from the non-Abelian symplectic case.
\end{abstract}

\bigskip
\begin{center}
  \begin{minipage}{0.7\linewidth}
    \begin{small}
      \tableofcontents
    \end{small}
  \end{minipage}
\end{center}

\vfill
\begin{footnotesize}
  \hbox to 16em{\hrulefill}

  2010 Mathematics Subject Classification: Primary 53C26; Secondary 53D20, 57S25.

  Keywords: cut, moment map, hyperk\"ahler, symplectic.
\end{footnotesize}

\clearpage
\section{Introduction}
\label{sec:intro}

Cuts and modifications were introduced as constructions of symplectic or
hyperk\"ahler manifolds from examples in the same dimension with circle or
torus symmetry \cite{Lerman:cuts,Burns-GL:cuts,Dancer-S:mod}.  These
constructions start with a space~\( M \) that has an action of an Abelian
group~\( G \) preserving the geometric structure and admitting a moment
map.  One then chooses a space~\( X \) with the same type of geometric
structure and also with a \( G \)-action, such that the moment reduction
of~\( X \) is a point.  The reduction of \( M \times X \) by the
anti-diagonal \( G \)-action then gives a new space~\( \hat M \) of the
same dimension as~\( M \), inheriting a geometrical structure of the same
type as that on~\( M \) together with a \( G \)-action from the
\emph{diagonal} action on \( M \times X \).  For symplectic manifolds \(
\hat M \) is referred to as the \emph{cut space} of~\( M \).  For
hyperk\"ahler manifolds, where the construction has a somewhat different
character, we introduced the term \emph{modification}
in~\cite{Dancer-S:mod}.

In Lerman's original construction for symplectic manifolds with circle
action \cite{Lerman:cuts}, one takes \( X = \bC = \bR^2 \); in the case \(
G = T^n \) of~\cite{Burns-GL:cuts} the space~\( X \) is a toric variety of
real dimension~\( 2n \).  Lerman's construction removes part of~\( M \) and
collapses circle orbits on the resulting boundary to give a smooth
symplectic manifold~\( \hat M \).  In the hyperk\"ahler
setting~\cite{Dancer-S:mod}, we took \( X = \bH = \bR^4 \) when \( G \)~is
a circle, and one may take \( X \)~to be a hypertoric variety
\cite{Bielawski-Dancer:toric} of real dimension~\( 4n \) when~\( G=T^n \).
Hyperk\"ahler modifications by circles change the topology of~\( M \)
whilst preserving completeness properties of the Ricci-flat metric.  The
whole of~\( M \) plays a role, no part is removed.  In favourable
situations the modification increases the second Betti number by one and in
small dimensions the construction may be interpreted as adding a D6-brane.

It is natural to ask whether these constructions have analogues for
non-Abelian groups~\( G \).  In symplectic geometry, the case of~\( G =
\Un(n) \) and \( X = \Hom_\bC (\bC^n,\bC^n) \) was analysed by
Weitsman~\cite{Weitsman:cuts} and applied to geometric quantization.  We
provide a simple approach to his construction in terms of the polar
decomposition of matrices in~\S\ref{sec:symplectic}.

The main concern of this paper is to discuss related constructions for
hyperk\"ahler manifolds with actions of non-Abelian groups, with a
particular focus on the case \( G = \Un(n) \).  A general theme of our
approach is that the geometry of the cutting or modification is controlled
by the moment map geometry of~\( X \).  We are therefore led to consider
hyperk\"ahler moment maps for non-Abelian actions.  As we shall see, these
exhibit rather different behaviour from moment maps in the Abelian
hyperk\"ahler and the (Abelian or non-Abelian) symplectic settings.

We describe two different constructions.  In the first, \S\ref{sec:hom}, we
take \( X=\Hom_\bC(\bC^n,\bC^n) \oplus \Hom_\bC(\bC^n,\bC^n)=\bH^{n^2} \)
and only consider actions of~\( \Un(n) \).  However, there are applications
to \( \SU(n) \) manifolds.  The construction has features of both the cut
and of the modification: parts of \( M \) are removed, the modified space
contains a copy of the hyperk\"ahler reduction of the original space,
and equivariant bundle structures are
changed on open sets.  A new feature also occurs: some parts of~\( M \) are
blown-up.  One example of this construction is provided by the deformed
instanton spaces of Nakajima~\cite{Nakajima:Hilbert}.  The second
construction, \S\ref{sec:gauge}, is applicable for any compact Lie group~\(
G \) and uses the hyperk\"ahler structure found on~\( X = T^*G_\bC \)
\cite{Kronheimer:cotangent,Dancer-Swann:compact-Lie}.  The latter
construction preserves the topology but gives deformations of the
hyperk\"ahler geometry.  The paper starts by reviewing the known cut and
modification constructions.

\section{The double fibration picture}
\label{sec:double}

\paragraph{Symplectic Abelian case}

Suppose \( (M,\omega) \) is a symplectic manifold, so \( \omega \) is a
closed non-degenerate two-form.  If a group \( G \) acts on \( M \)
preserving \( \omega \), then a moment map is by definition an equivariant
map \( \mu\colon M\to\lie g^* \) such that \( d\langle\mu,A\rangle =
\xi_A{\lrcorner} \omega \) for each \( A\in\lie g \), where \( \xi_A \) is
the corresponding vector field on~\( M \), see
e.g.~\cite{Guillemin-S:symplectic}.  If \( \mu \) exists then the \( G
\)-action is said to be Hamiltonian.

Suppose \( X \) is a Hamiltonian \( G \)-manifold with moment map~\( \phi
\), that \( G \)~acts freely on an open set of~\( X \) and that the
reduction \( \phi^{-1}(\varepsilon)/G \) is (where non-empty) a point for
each regular value \( \varepsilon\in\lie z^* \), where \( \lie z \) is the
centre of \( \lie g \).  If \( G \) is Abelian, we define the cut \( \hat M
\) to be
\begin{equation*}
  \hat M = (\mu - \phi)^{-1}(\varepsilon) / G
\end{equation*}
where \( G \)~acts on \( M\times X \) via the anti-diagonal action \(
(m,x)\mapsto (g\cdot m,g^{-1}\cdot x) \) and \( (\mu-\phi)(m,x) = \mu(m)-
\phi(x) \).  Thus \( \hat M \) is the symplectic reduction of \( M\times X
\) by the anti-diagonal action of~\( G \).  If smooth, the manifold \( \hat
M \) is now a symplectic \( G \)-manifold.  In general, \( \hat M \) will
be a stratified symplectic space, cf.~\cite{Sjamaar-Lerman}.  We will work
in the smooth category.

The precise way in which \( M \) and~\( \hat M \) are related can be
understood in terms of the moment map geometry of the \( G \)-action on the
particular space~\( X \).

For~\( G=T^n \), one takes \( X \) to be a toric variety of dimension~\( 2n
\) \cite{Burns-GL:cuts}.  The moment map \( \phi \) on \( X \) has image a
convex polyhedron~\( \Delta \) in~\( \bR^n \).  Moreover \( T^n \) acts
transitively on the fibres of~\( \phi \).  In fact this map gives a trivial
\( T^n \)-fibration over the interior of~\( \Delta \).  On the boundary of
\( \Delta \), the fibres are tori of lower dimension as the \( T^n
\)-action is no longer free: in particular \( \phi \) is injective over
vertices of~\( \Delta \).

The cut manifold \( \hat M \) fits into what we call a \qq{double fibration
picture}
\begin{equation*}
  \begin{CD}
    M @<\pi_1<< N @>\pi_2>> \hat M
  \end{CD}
\end{equation*}
(though the left hand map~\( \pi_1 \) can have some special fibres), with
\( N = \{\,(m, x) : \mu(m) - \phi(x) = \varepsilon \,\} \) and \( \pi_2
\)~the quotient map for the anti-diagonal \( T^n \) action.  We assume we
have chosen~\( \varepsilon \) so that the \( T^n \)-action on~\( N \) is
free, so \( \pi_2 \) is a \( T^n \)-fibration.

The map~\( \pi_1 \) is defined by \( (m, x) \mapsto m \), so the fibre
of~\( \pi_1 \) over~\( m \) may be identified with the fibre of~\( \phi \)
over~\( \mu(m) - \varepsilon \).  Hence the image of~\( \pi_1 \) is just \(
\mu^{-1} (\Delta + \varepsilon) \).  Moreover on the interior of this set
\( \pi_1 \) is a trivial \( T^n \)-fibration, while on the boundary the
fibres are lower-dimensional tori.  So \( \hat M \) may be obtained by
removing the complement of \( \mu^{-1} (\Delta + \varepsilon) \) and
performing collapsing by the appropriate tori on the boundary.

In the particular case of Lerman's original construction we have \( G = S^1
\) and \( X = \bC \), so \( \Delta \) is the non-negative half\cwm line
in~\( \bR \).  Now \( \phi \) is injective over the origin and is a trivial
circle fibration on the positive half-line.  Then \( \hat M \) is obtained
by removing \( \mu^{-1}(- \infty, \varepsilon) \) and collapsing circle
fibres on \( \mu^{-1}(\varepsilon) \).

\paragraph{Hyperk\"ahler Abelian case}
The hyperk\"ahler case studied in \cite{Dancer-S:mod} presents some new
features.  The data of a hyperk\"ahler manifold \( M \) consists of a
metric \( g \) and three compatible complex structures \( I \), \( J \), \(
K \) with \( IJ = K = -JI \) such that the two-forms \(
\omega_I(\cdot,\cdot) = g(I\cdot,\cdot) \), \( \omega_J \) and \( \omega_K
\) are closed.  This implies that \( g \) is Ricci-flat and that \( M \) is
symplectic in three ways.  If the action of \( G \) is tri-Hamiltonian, that is, 
if \( G \) acts in a Hamiltonian way with
respect to each of the symplectic forms \( \omega_I \), \( \omega_J \), \(
\omega_K \), we consider the hyperk\"ahler moment map \( \mu\colon M\to
\lie g^*\otimes \bR^3 \) given by \( \mu = (\mu_I,\mu_J,\mu_K) \).  When \(
G \) acts freely \( \mu^{-1}(\varepsilon) / G \) is again a hyperk\"ahler
manifold of dimension \( 4\dim G \) less than \( M \) for each \(
\varepsilon\in\lie z^*\otimes \bR^3 \).

For \( G \)~Abelian, the hyperk\"ahler modification is defined in an
analogous way to the symplectic cut: one takes \( X \) to be an appropriate
hyperk\"ahler manifold with tri-Hamiltonian \( G \)-action, moment map~\(
\phi\colon X\to\lie g^*\otimes\bR^3 \), and puts
\begin{equation*}
  \Mmod = (\mu-\phi)^{-1}(\varepsilon) / G
\end{equation*}
for the anti-diagonal \( G \)-action on \( M\times X \).  Now consider the
double fibration picture
\begin{equation*}
  \begin{CD}
    M @<\pi_1<< N @>\pi_2>> \Mmod.
  \end{CD}
\end{equation*}

For circle actions, we take \( X = \bH = \bR^4 \), and now \( \phi \colon X
\rightarrow \bR^3 \) is \( \phi(q) = \overline q i q \), which is
surjective, injective over the origin and a non-trivial circle fibration
away from the origin.  (In fact, it is the Hopf map).  This gives that the
map~\( \pi_1 \) is surjective, is injective over \( \mu^{-1}(\varepsilon)
\), and is a non-trivial circle fibration away from \(
\mu^{-1}(\varepsilon) \).

Hence we still collapse circle fibres on the set~\( \mu^{-1}(\varepsilon)
\) but do not now remove any part of~\( M \).  The complements \( M
\setminus \mu^{-1}(\varepsilon) \) and \( \Mmod \setminus
(\mu^{-1}(\varepsilon)/S^1) \) are not diffeomorphic; rather they have a
common space \( N \setminus (\mu^{-1}(\varepsilon) \times \{0\} ) \)
sitting above them as the total space of non-trivial circle fibrations.

\paragraph{Non-Abelian constructions}

Let us now consider the case of non-Abelian \( G \).  As the diagonal and
anti-diagonal actions no longer commute, the above method will no longer
produce a space with a \( G \)-action.  One circumvents this problem by
instead requiring~\( X \) to have a \( G \times G \)-action.  We denote the
left and right copies of~\( G \) by \( G_\Le \) and~\( G_\Ri \)
respectively.

One can then define~\( \hat M \) (or \( \Mmod \)) to be the moment
reduction of \( M \times X \) by \( G \) at level \( \varepsilon\in\lie z^*
\) (respectively \( \lie z^*\otimes\bR^3 \)), where \( G \) acts in the
given way on~\( M \) and by \( G_\Ri \) on~\( X \).  The action of~\( G_\Le
\) now induces a \( G \)-action on~\( \hat M \).  Again, we have a double
fibration picture
\begin{equation}
  \label{eq:double}
  \begin{CD}
    M @<\pi_1<< N @>\pi_2>> \hat M, \Mmod
  \end{CD}
\end{equation}
where
\begin{equation*}
  N = \{\, (m, x) : \mu(m) + \phi (x) = \varepsilon \,\}
\end{equation*}
and \( \phi \) is the moment map for the \( G_\Ri \)-action on~\( X \).  As
in the Abelian case, to analyse this picture we must understand the fibres
of \( \phi \).

\section{Symplectic cuts by unitary groups}
\label{sec:symplectic}

Let us now study from the above point of view Weitsman's construction of
cuts for symplectic manifolds with \( \Un(n) \) action
\cite{Weitsman:cuts}.  We shall find that it may be understood in terms of
the geometry of the polar decomposition.

Weitsman takes \( X \) to be \( \Hom_\bC (\bC^n, \bC^n) \) with a \(
\Un(n)_\Le \times \Un(n)_\Ri \)-action
\begin{equation*}
  (U,V) \colon A \mapsto UAV^{-1}.
\end{equation*}
The moment map \( \phi \colon X \rightarrow \un(n) \) for the \( \Un(n)_\Ri
\)-action is
\begin{equation*}
  A \mapsto i A^* A
\end{equation*}
which of course is equivariant for the \( \Un(n)_\Ri \)-action and is
invariant for the \( \Un(n)_\Le \)-action.  The image of~\( \phi \) is now
just~\( i\Delta(n) \), where \( \Delta(n) \) is the set of non-negative
Hermitian \( n \times n \) matrices.

We can study \( \phi \) in terms of the polar decomposition of~\( A \).
Write \( A = UP \), where \( P \) is non-negative and \( U \) is unitary.
Now \( P \) is uniquely determined by~\( A \), while \( U \) is uniquely
determined by~\( A \) if \( A \) is invertible.  Explicitly, \( P \) is
the unique non-negative square root~\( H^{1/2} \) of~\( H = A^* A \).  So
knowing \( iH = \phi(A) \) determines~\( P \) but does not impose any
conditions on~\( U \).  Moreover, the map \( H\mapsto H^{1/2} \) is a
section for~\( \phi \).

Thus \( \phi \) maps \( X \) onto~\( i\Delta(n) \) and is a trivial \(
\Un(n) \)-fibration over its interior \( i\Delta(n)^\circ \), which is \( i
\)~times the set of positive Hermitian matrices.  Over the boundary of~\(
i\Delta(n) \), the fibre is \( \Un(n)/\Un(n-k) \), where \( k \) is the
number of positive eigenvalues of~\( H \).  In particular, \( \phi \)~is
injective over the zero matrix.

Let us see what this means in the double fibration
picture~\eqref{eq:double}.  Now the image of~\( \pi_1 \) is \( \mu^{-1} (
-i\Delta(n) + \varepsilon) \), and over \( \mu^{-1}(-i\Delta(n)^\circ +
\varepsilon) \) the map~\( \pi_1 \) is a trivial \( \Un(n) \)-fibration.

So we can view \( \hat M \) as being obtained by removing the complement of
\( \mu^{-1}(-i\Delta(n) +\varepsilon) \) and performing collapsing on the
boundary.  More precisely, if \( H \) has \( k \) positive eigenvalues we
replace \( \mu^{-1}(\varepsilon - iH) \) by \( \mu^{-1}(\varepsilon -
iH)/\Un(n-k) \).  In particular, \( \mu^{-1}(\varepsilon) \) is replaced by
\( \mu^{-1}(\varepsilon)/\Un(n) \).  We thus recover the results of
Weitsman.

\section{A recipe for hyperk\"ahler unitary modifications}
\label{sec:hom}

Let us now consider modifications of hyperk\"ahler manifolds with \( \Un(n)
\) action, for \( n>1 \).  The most obvious choice of~\( X \) in the light of
Weitsman's symplectic construction is the flat hyperk\"ahler vector space
\begin{equation*}
  X = \bH^{n^2} = \Hom_\bC ( \bC^n , \bC^n ) \oplus
  \Hom_\bC (\bC^n, \bC^n).
\end{equation*}
This has a hyperk\"ahler action of~\( \Un(n) \times \Un(n) \) (which we
shall denote, as above, by \( \Un(n)_\Le \times \Un(n)_\Ri \)) given as
follows:
\begin{equation*}
  (U, V) \colon (A, B) \mapsto (UAV^{-1}, VBU^{-1}).
\end{equation*}
Note that the locus where \( \Un(n)_\Ri \) (or~\( \Un(n)_\Le \)) acts
freely includes the open set where \( A \) or~\( B \) is invertible.

\begin{definition}
  \label{def:Hom-mod}
  Let \( M \) be a hyperk\"ahler manifold with tri-Hamiltonian \( \Un(n)
  \)-action and moment map~\( \mu \).  The \emph{hyperk\"ahler
  modification} \( \Mmod \) of~\( M \) at level \( \varepsilon \in \lie
  z\otimes \bR^3 \) with respect to
  \begin{equation*}
    X =\Hom_\bC ( \bC^n , \bC^n ) \oplus \Hom_\bC
    (\bC^n, \bC^n)
  \end{equation*}
  is the hyperk\"ahler quotient
  \begin{equation*}
    (\mu+\phi)^{-1}(\varepsilon)/\Un(n) 
  \end{equation*}
  of \( M \times X \) by~\( \Un(n) \), where \( \Un(n) \) acts on~\( M \)
  by the given action and on~\( X \) by~\( \Un(n)_\Ri \) and \( \phi \) is
  the hyperk\"ahler moment map for the \( \Un(n)_\Ri \)-action on~\( X \):
  \begin{equation}
    \label{eq:phi-hk}
    \phi(A, B) =  \left( \tfrac i2 (A^*A - BB^*), BA \right) \in \un(n)
    \oplus \gl(n,\bC). 
  \end{equation}
\end{definition}

In the formula for \( \phi\colon X\to \un(n)\otimes\bR^3 \), we have chosen
a splitting \( \bR^3 = \bR+\bC \) and used \( \un(n)\otimes \bC =
\gl(n,\bC) \), so \( \phi = (\phi^{\bR},\phi^{\bC}) \).  Note that \( \lie
z = i\bR \Id \), so \( \varepsilon \) is just a point of \( \bR^3 \).

The modification~\( \Mmod \) is hyperk\"ahler with a \( \Un(n) \)-action
induced by the action of~\( \Un(n)_\Le \) on~\( X \).  We will work in the
smooth category, but in general \( \Mmod \) will decompose into a union of
hyperk\"ahler manifolds, cf.~\cite{Dancer-Swann:geometry-qK}.

Starting with a \( \Un(n) \)-orbit \( \Un(n)\cdot m \) in~\( M \), we wish
to understanding the corresponding subset in the modification~\( \Mmod \).
Let \( H \) be the \( \Un(n) \) stabiliser of~\( m \).  We use the notation
of the double fibration picture~\eqref{eq:double}.  The set \(
\hkmod{(\Un(n)\cdot m)} = \pi_2^{}\pi_1^{-1}(\Un(n)\cdot m) \) can be
identified with the quotient of \( \phi^{-1}(\varepsilon-\mu(m)) \) by~\(
H_\Ri \), where \( H_\Ri \) is \( H \)~acting as a subgroup of \(
\Un(n)_\Ri \).  The new \( \Un(n) \)-orbits are the \( \Un(n)_\Le \)-orbits
in this quotient. 

Therefore we need to analyse the fibres \( \phi^{-1}(R,S) \) of the map~\(
\phi \). Note that \( \phi \) is equivariant under the \( \Un(n)_\Ri \)
action and invariant under the \( \Un(n)_\Le \) action, as expected.  

First, we study the linearisation.  This is given by
\begin{equation*}
  d \phi_{(A,B)} \colon (a,b) \mapsto \left( \tfrac i2 (a^* A + A^* a - B b^* -
    b B^*), Ba + b A \right)
\end{equation*}
Let us now take~\( S \) (and hence \( A \), \( B \)) to be invertible.  The
\( \Un(n)_\Ri \) and \( \Un(n)_\Le \) actions are therefore free at~\(
(A,B) \).
 
A vector \( (a,b) \) in the kernel of the complex part \( d \phi^\bC \) of
\( d \phi \) may now be written as \( (hA, -Bh) \) for a unique \( h \in
\gl(n, \bC) \).  The real part of \( d \phi \) acts on this vector by
\begin{equation*}
  d \phi^\bR_{(A,B)} \colon (hA, -Bh) \mapsto A^* (h + h^*) A + B(h + h^*)
  B^*,
\end{equation*}
We obtain

\begin{lemma}
  \label{lem:h}
  Suppose \( A \) and \( B \) are invertible.  The kernel of \( d
  \phi_{(A,B)} \) is the set of vectors \( (hA, -Bh) \) satisfying
  \begin{equation*}
    L (h + h^*) L^* + (h + h^*) =0
  \end{equation*}
  where \( L = B^{-1}A^* \).  In particular, the Hermitian part of \( h \)
  has signature~\( 0 \).  \qed
\end{lemma}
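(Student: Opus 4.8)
The plan is to convert the two displayed formulas immediately preceding the statement into the asserted description of \( \ker d\phi_{(A,B)} \), and then to read off the signature claim from Sylvester's law of inertia.

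First I would record the parametrisation carefully: since \( A \) is invertible, any \( (a,b) \) with \( Ba + bA = 0 \) can be written \( a = hA \) with \( h = aA^{-1} \in \gl(n,\bC) \), and then \( 0 = Ba + bA = (Bh + b)A \) forces \( b = -Bh \); so \( \ker d\phi^{\bC}_{(A,B)} \) is exactly \( \{\, (hA, -Bh) : h \in \gl(n,\bC) \,\} \), with \( h \) unique. On this slice the formula quoted in the text gives that \( d\phi^{\bR}_{(A,B)}(hA,-Bh) \) equals, up to the irrelevant scalar \( \tfrac i2 \), the expression \( A^*(h+h^*)A + B(h+h^*)B^* \). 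Hence \( (hA,-Bh) \) lies in \( \ker d\phi_{(A,B)} \) precisely when
\[ A^*(h+h^*)A + B(h+h^*)B^* = 0. \]

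Next I would multiply this relation on the left by \( B^{-1} \) and on the right by \( (B^*)^{-1} \), which is legitimate because \( B \) is invertible, and observe that with \( L = B^{-1}A^* \) one has \( L^* = A(B^*)^{-1} \); writing \( k := h + h^* \), the second term then becomes \( k \) and the first becomes \( LkL^* \), so the relation turns into exactly \( L(h+h^*)L^* + (h+h^*) = 0 \), the condition in the statement. The only point requiring attention in this step is the bookkeeping of conjugate transposes, in particular the identity \( L^* = A(B^*)^{-1} \); I do not expect a genuine obstacle here.

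Finally, I would note that \( k = h + h^* \) is Hermitian and is a positive multiple of the Hermitian part of \( h \), so the two have the same signature. The equation \( LkL^* = -k \) with \( L \) invertible exhibits the Hermitian forms \( k \) and \( -k \) as congruent, so by Sylvester's law of inertia they share the same inertia \( (p,q) \); but the inertia of \( -k \) is \( (q,p) \), whence \( p = q \) and the signature is \( 0 \). Sylvester's law is the only non-mechanical ingredient in the argument, and it is precisely what delivers the signature conclusion — note it does not force \( h + h^* = 0 \), only that its positive and negative eigenvalues balance.
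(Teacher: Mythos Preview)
Your proposal is correct and follows exactly the route the paper intends: the paper derives the formulas for \( \ker d\phi^{\bC} \) and \( d\phi^{\bR} \) on that slice just before the lemma and then marks the lemma with a \qed, and you have filled in precisely the remaining two steps, namely conjugating by \( B^{-1} \) to obtain the \( L \)-equation and invoking Sylvester's law for the signature claim. Your bookkeeping with \( L^* = A(B^*)^{-1} \) and the inertia argument are both accurate.
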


Note that if \( h \) is skew-Hermitian then \( (hA, -Bh) \) is always in \(
\ker d \phi \).  This of course is just the infinitesimal version of the
statement that \( \phi \) is invariant under the action of \( \Un(n)_\Le
\).  Such \( h \) give an \( n^2 \)-dimensional subspace in~\( \ker d \phi
\).

Now \( (A, B) \) is a critical point of~\( \phi \) if and only if \( \dim
\ker d \phi_{(A,B)} > n^2 \).  From Lemma~\ref{lem:h} above, this holds
(with \( A, B \) invertible) if and only if the operator \( \mathbf L = L
\otimes \overline L \) acting on \( \bC^n \otimes (\bC^n)^* = \Hom_\bC
(\bC^n, \bC^n) \) has~\( -1 \) as an eigenvalue with a Hermitian
eigenvector.

Now the eigenvalues of \( \mathbf L \) are just the products of eigenvalues
of~\( L \) with those of~\( \overline L \) (see \cite{Bernstein:matrix}),
so \( -1 \) is an eigenvalue of~\( \mathbf L \) if and only if there is a
complex number~\( \lambda \) such that \( \lambda \) and~\(
-1/\overline{\lambda} \) are both eigenvalues of~\( L \).  The eigenvalues
of~\( \overline L \) then include \( \overline{\lambda} \) and \(
-1/\lambda \), so \( -1 \) actually has multiplicity at least two as an
eigenvalue of \( \mathbf L \) on~\( \Hom_{\bC}(\bC^n,\bC^n) \).  If \( v,w
\) are eigenvectors of \( L \) with eigenvalues \( \lambda,
-1/\overline{\lambda} \) respectively, we see that \( v \otimes w^* + v^*
\otimes w \) is then a Hermitian eigenvector for \( \mathbf L \) with
eigenvalue~\( -1 \).

\begin{theorem}
  A point \( (A, B) \) with \( A,B \) invertible is a critical point for \(
  \phi \) if and only if \( L = B^{-1} A^* \) has a pair of eigenvalues of
  the form \( \lambda, -1/\overline{\lambda} \).

  The set of critical points for \( \phi \) therefore has real codimension
  two in~\( X = \bH^{n^2}\).  The regular points form an open dense set
  in~\( X \). \qed
\end{theorem}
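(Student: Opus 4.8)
\medskip\noindent\emph{Proof sketch.}
The first assertion is immediate from the discussion preceding the statement. By Lemma~\ref{lem:h}, for invertible \( A \) and~\( B \) the point \( (A,B) \) is critical precisely when \( \dim\ker d\phi_{(A,B)}>n^{2} \), equivalently when \( \mathbf L=L\otimes\overline L \) has \( -1 \) as an eigenvalue with a Hermitian eigenvector; since the eigenvalues of \( \mathbf L \) are the products \( \lambda_{a}\overline{\lambda_{b}} \) of eigenvalues of~\( L \), this happens exactly when \( L \) has eigenvalues of the form \( \lambda \) and \( -1/\overline\lambda \), in which case \( v\otimes w^{*}+v^{*}\otimes w \) (built from the corresponding eigenvectors) supplies the Hermitian eigenvector. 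So only the codimension count needs a separate argument.

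To carry it out, I would split \( X=\bH^{n^{2}} \) into the open set \( U=\{(A,B):A,B\text{ invertible}\} \) and its complement \( \{\det A=0\}\cup\{\det B=0\} \), the latter being a union of two zero loci of holomorphic functions, hence closed of real codimension two. On~\( U \) consider \( q\colon U\to\GL(n,\bC) \), \( q(A,B)=B^{-1}A^{*} \). For each invertible~\( B \) the map \( A\mapsto B^{-1}A^{*} \) is a bijection of \( \GL(n,\bC) \), so \( (A,B)\mapsto(B^{-1}A^{*},B) \) is a diffeomorphism onto \( \GL(n,\bC)\times\GL(n,\bC) \) and \( q \) is a submersion. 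By the first part, the critical set inside~\( U \) is \( q^{-1}(\Sigma) \) with
\begin{equation*}
  \Sigma=\{\,L\in\GL(n,\bC):\lambda\text{ and }-1/\overline\lambda\text{ are eigenvalues of }L\text{ for some }\lambda\in\bC\,\}.
\end{equation*}
As a submersion, \( q \) preserves codimension, so it suffices to show \( \Sigma \) has real codimension two in \( \GL(n,\bC) \).

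For this I would pass to eigenvalues using the characteristic-polynomial-coefficient map \( c\colon\GL(n,\bC)\to\bC^{n} \). Over the open dense set \( \mathcal D\subset\bC^{n} \) of squarefree polynomials with nonzero constant term, \( c \) restricts to a submersion onto~\( \mathcal D \), and on~\( \mathcal D \) the roots \( \lambda_{1},\dots,\lambda_{n} \) are locally well-defined holomorphic functions with linearly independent differentials; moreover the complement of \( V=c^{-1}(\mathcal D) \) is a proper algebraic subset of \( \GL(n,\bC) \), hence of real dimension \( \le 2n^{2}-2 \). On~\( V \) we have \( \Sigma\cap V=(c|_{V})^{-1}\bigl(\bigcup_{i<j}\{\overline{\lambda_{i}}\lambda_{j}=-1\}\bigr) \), and for each \( i<j \) the \( \bC \)-valued function \( \overline{\lambda_{i}}\lambda_{j}+1 \) has surjective real differential at each of its zeros (there \( \lambda_{i},\lambda_{j}\neq0 \)), so its zero set is a real codimension-two submanifold; a finite union of these has real dimension \( \le 2n-2 \), and pulling back by the submersion \( c|_{V} \) shows \( \Sigma\cap V \) has real codimension two in~\( V \). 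Hence \( \dim_{\bR}\Sigma\le 2n^{2}-2 \); since \( \Sigma \) is non-empty — for instance \( \diag(\ii,-\ii,1,\dots,1)\in\Sigma \), as \( -1/\overline{\ii}=-\ii \) and \( n>1 \) — and is a codimension-two submanifold near its generic points, its codimension is exactly two. Combining the two strata, the critical set of~\( \phi \) is contained in a closed set of real codimension \( \ge 2 \) while containing a codimension-two submanifold, so it has real codimension exactly two and the regular set is open and dense in~\( X \).

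The step I expect to be the main obstacle is making the eigenvalue parametrization rigorous over the locus where the characteristic polynomial is not squarefree, i.e.\ checking that this exceptional part of~\( \Sigma \) does not have larger dimension; the estimate above handles it, since everything outside~\( V \) lies over a set of complex codimension \( \ge1 \) in~\( \bC^{n} \). As a cross-check one may note that \( \Sigma=\{L:\det(L\otimes\overline L+\Id)=0\} \), where \( \det(L\otimes\overline L+\Id) \) is a polynomial in the entries of \( L \) and~\( \overline L \) equal, by the eigenvalue computation, to \( \prod_{i}(1+\abs{\lambda_{i}}^{2})\prod_{i<j}\abs{1+\lambda_{i}\overline{\lambda_{j}}}^{2} \); this non-negative, not identically zero real-analytic function already shows the regular set is open and dense.
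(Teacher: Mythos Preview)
Your argument is correct. For the eigenvalue criterion it reproduces exactly what the paper does: the paper's proof is just the discussion preceding the theorem (the statement carries a \( \Box \)), namely Lemma~\ref{lem:h} together with the observation that \( \mathbf L=L\otimes\overline L \) has eigenvalues \( \lambda_a\overline{\lambda_b} \) and that \( v\otimes w^*+v^*\otimes w \) furnishes the Hermitian eigenvector.

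For the codimension count, the paper gives no separate argument at all; it treats the conclusion as evident from the eigenvalue condition. Your proof therefore goes well beyond the paper here: the change of variables \( (A,B)\mapsto(B^{-1}A^*,B) \) to reduce to a statement about \( \Sigma\subset\GL(n,\bC) \), the further reduction via the characteristic-coefficient submersion to the zero loci of \( \overline{\lambda_i}\lambda_j+1 \) on the squarefree locus, and the bookkeeping for the complement of~\( U \) and of~\( V \) are all additions. Each step checks out (in particular your submersion claims for \( q \) and for \( c|_V \) are easily verified, and the \( S_n \)-invariance of the union \( \bigcup_{i<j}\{\overline{\lambda_i}\lambda_j=-1\} \) makes the local eigenvalue description globally meaningful). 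The closing remark that \( \det(\mathbf L+\Id)=\prod_i(1+\abs{\lambda_i}^2)\prod_{i<j}\abs{1+\lambda_i\overline{\lambda_j}}^{2} \) is a nonnegative real-analytic function not identically zero is a clean way to get open-density directly, and is probably closest in spirit to what the paper's bare \( \Box \) intends.
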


Note that if \( (R,S) \) is a regular value then \( \phi^{-1}(R,S) \) is an
\( n^2 \)-dimensional manifold with an action of \( \Un(n)_\Le \).

\begin{theorem}
  \label{thm:free}
  Let \( (R, S) \) be a regular value of \( \phi \), where \( S \) is
  invertible.  Then \( \Un(n)_\Le \) acts freely and locally transitively
  on the fibre \( \phi^{-1}(R,S) \).  \qed
\end{theorem}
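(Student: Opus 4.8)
The plan is to reduce the statement to a dimension count, exploiting that invertibility of $S$ forces every matrix appearing in the fibre to be invertible. First I would record that $(A,B)$ lies in $\phi^{-1}(R,S)$ exactly when $\tfrac i2(A^*A-BB^*)=R$ and $BA=S$; since $S$ is invertible so is the product $BA$, and hence both $A$ and $B$ are invertible at every point of $\phi^{-1}(R,S)$. Because $\phi$ is $\Un(n)_\Le$-invariant, the action $(A,B)\mapsto(UA,BU^{-1})$ of $\Un(n)_\Le$ preserves this fibre, so studying the action there makes sense.

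Freeness is then immediate: if $U$ fixes a point $(A,B)$ of the fibre then $UA=A$, and cancelling the invertible matrix $A$ gives $U=\Id$. The same invertibility yields \emph{infinitesimal} freeness, which is the key input for local transitivity: the fundamental vector field of $\xi\in\un(n)$ at $(A,B)$ is $(\xi A,-B\xi)$, and the linear map $\xi\mapsto(\xi A,-B\xi)$ from $\un(n)$ into $T_{(A,B)}X$ is injective, so the tangent space at $(A,B)$ to the $\Un(n)_\Le$-orbit has real dimension $n^2=\dim\Un(n)$.

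To finish, I would compare this with the dimension of the fibre itself. By $\Un(n)_\Le$-invariance of $\phi$, all the orbit directions $(\xi A,-B\xi)$ lie in $\ker d\phi_{(A,B)}=T_{(A,B)}\phi^{-1}(R,S)$; this is exactly the observation, already made after Lemma~\ref{lem:h}, that the skew-Hermitian $h$ contribute an $n^2$-dimensional subspace of $\ker d\phi$. Since $(R,S)$ is a regular value and the target $\un(n)\otimes\bR^3$ has real dimension $3n^2$, the differential $d\phi_{(A,B)}$ is surjective and $\ker d\phi_{(A,B)}$ has real dimension $4n^2-3n^2=n^2$. Thus the $n^2$-dimensional orbit tangent space already exhausts the tangent space to the fibre at $(A,B)$, so the $\Un(n)_\Le$-orbit is open in $\phi^{-1}(R,S)$; that is the asserted local transitivity. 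I do not expect a real obstacle here: the only point requiring care is the propagation of invertibility from $S$ to $A$ and $B$, which simultaneously drives the freeness and the injectivity of $\xi\mapsto(\xi A,-B\xi)$ — the rest is bookkeeping with the dimensions $4n^2$, $3n^2$ and $n^2$.
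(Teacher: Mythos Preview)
Your proposal is correct and follows exactly the route the paper intends: the theorem is stated with a bare \(\qed\) because the preceding discussion already observes that a regular fibre over an invertible~\(S\) is an \(n^2\)-dimensional manifold on which \(\Un(n)_\Le\) acts freely (via invertibility of \(A,B\)), so orbits are open. You have simply written out those implicit steps.
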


In fact the quotient of the fibre by \( \Un(n)_\Le \) is finite, since \(
\phi \) is a polynomial map in the real and imaginary parts of \( A_{ij},
B_{ij} \).  

\begin{theorem}
  There exist critical values \( (0,S) \) with \( S \)~invertible for which
  the fibre \( \phi^{-1}(0,S) \) has dimension strictly greater than~\( n^2
  \).
\end{theorem}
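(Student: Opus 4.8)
The plan is to make the fibre \( \phi^{-1}(0,S) \) completely explicit when \( S \) is invertible, and then to choose \( S \) so that this fibre is strictly larger than the generic one of dimension \( n^2 \). First I would observe that on \( \phi^{-1}(0,S) \) with \( S \) invertible one automatically has \( A \), \( B \) invertible, since \( BA = S \), so that \( B = S A^{-1} \); writing \( H = A^* A > 0 \), the remaining condition \( \phi^{\bR}(A,B) = 0 \), that is \( A^* A = B B^* \), becomes
\begin{equation*}
  H = S\,H^{-1} S^{*}.
\end{equation*}
Conversely, any positive \( H \) solving this, together with a unitary \( U \), gives the fibre point \( (U H^{1/2},\, S H^{-1/2} U^{-1}) \), and \( A \mapsto (A^*A,\, A(A^*A)^{-1/2}) \) then identifies
\begin{equation*}
  \phi^{-1}(0,S)\;\cong\;\Un(n)\times\mathcal H_S,\qquad
  \mathcal H_S:=\{\,H>0 : H = S H^{-1} S^{*}\,\},
\end{equation*}
with \( \Un(n)_{\Le} \) acting freely by left translation on the first factor. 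Hence the fibre has dimension \( n^2 + \dim\mathcal H_S \), and it exceeds \( n^2 \) exactly when \( \mathcal H_S \) is positive-dimensional; note that Theorem~\ref{thm:free} forces \( \mathcal H_S \) to be finite whenever \( (0,S) \) is moreover a regular value. (As a consistency check with the earlier criticality criterion, on this fibre \( L = B^{-1} A^{*} \) is unitary, equal to \( (U_A U_B)^{-1} \) for the polar decompositions \( A = U_A H^{1/2} \), \( B = H^{1/2} U_B \), so it carries a pair of eigenvalues \( \lambda, -1/\overline\lambda \) as soon as \( U_A U_B \) has eigenvalues \( \mu, -\mu \).)

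Next I would exhibit an invertible \( S \) with \( \dim\mathcal H_S \ge 2 \). Take \( S \) to be the permutation matrix interchanging the first two coordinates of \( \bC^n \) and fixing the remaining \( n-2 \), so that \( S = S^{*} = S^{-1} \). For real numbers \( a > 0 \), \( t \), and \( c = (1+t^{2})/a \), put \( H = \left(\begin{smallmatrix} a & it \\ -it & c \end{smallmatrix}\right) \oplus I_{n-2} \); then \( H > 0 \), since \( ac - t^{2} = 1 \), and a short direct computation (the permutation acts on the \( 2\times 2 \) block of \( H^{-1} \) by interchanging the two diagonal entries and the two off-diagonal entries, which recovers \( H \)) shows \( S H^{-1} S^{*} = H \). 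This is a smooth two-parameter family inside \( \mathcal H_S \), so \( \dim\mathcal H_S \ge 2 \) and, by the first step, \( \phi^{-1}(0,S) \) contains a smooth submanifold of \( X \) of real dimension \( n^{2}+2 \).

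Finally, since the fibre over any regular value of \( \phi \) is a manifold of dimension \( \dim X - \dim(\un(n)\oplus\gl(n,\bC)) = n^{2} \), the value \( (0,S) \) just constructed must be critical, and its fibre has dimension at least \( n^{2}+2 > n^{2} \), as required. The only step needing an idea is the middle one: recognising that the fibre equation collapses to \( H = S H^{-1} S^{*} \), and then finding an invertible \( S \) whose solution locus is positive-dimensional. A transposition works precisely because for it the obvious solution \( H = I \) is highly degenerate, the solution set swelling to a surface; everything else is routine bookkeeping with the polar decomposition already used in~\S\ref{sec:symplectic}.
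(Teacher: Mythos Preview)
Your proof is correct and takes a different, arguably cleaner, route from the paper's. Both arguments observe that on the fibre over \( (0,S) \) with \( S \) invertible one has \( A \), \( B \) invertible, and then parametrise the fibre by a unitary piece together with the solutions of a residual equation. The paper, however, uses a QR-type factorisation \( A=UT \), \( B=T^{*}V \) with \( T \) lower triangular with positive diagonal, reducing to the condition that \( (T^{*})^{-1}ST^{-1} \) be unitary; it then takes \( S=\diag(1,-1)\oplus L' \) and solves the resulting \( 2\times 2 \) equations explicitly, identifying the \( \Un(n)_\Le \)-quotient of the fibre with an open disc. Your use of the polar decomposition instead gives the tidier identification \( \phi^{-1}(0,S)\cong\Un(n)\times\mathcal H_S \), \( \mathcal H_S=\{H>0:H=SH^{-1}S^{*}\} \), making the free \( \Un(n)_\Le \)-action transparent and reducing the problem to exhibiting a positive-dimensional family in \( \mathcal H_S \); choosing \( S \) to be a transposition then produces such a family with almost no computation. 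What the paper's approach buys is a complete description of the quotient (an open disc, not merely something of dimension \( \geqslant 2 \)); what yours buys is conceptual clarity and a formula for the fibre dimension, \( n^{2}+\dim\mathcal H_S \), valid for every invertible~\( S \).
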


\begin{proof}
  Start by considering \( n=2 \) and take \( S = L = \diag(1, -1) \); a
  possible choice of point in~\( X \) giving such an~\( L \) is \( (A,B) =
  (L, I) \).  Now \( \mathbf L = L \otimes \overline L \) has a non-trivial
  space of Hermitian eigenvectors for eigenvalue~\( -1 \); in fact such
  eigenvectors are the real anti-diagonal matrices.

  Let us consider the fibre of~\( \phi \) over \( \phi(L, I) = (0, L) \).
  It is convenient to write \( (A,B)\in\phi^{-1}(0,L) \) as \( A = UT \)
  and~\( B = \tilde TV \), where \( U \), \( V \) are unitary and \( T \),
  \( \tilde T \) are lower and upper triangular respectively with real
  positive entries on the diagonal.

  The real equation \( \phi^\bR(A,B)=0 \) is \( A^* A = BB^* \) which
  becomes \( T^*T = \tilde T{\tilde T}^* \), and it is easy to check this
  implies \( \tilde T=T^* \).

  The complex equation \( \phi^\bC(A,B)=L \), is now the condition \(
  T^*VUT = L \), i.e., the requirement that \( (T^*)^{-1}LT^{-1} \) be
  unitary.  Each such \( T \) gives a unique \( (A,B) \) up to the action
  of~\( \Un(2)_\Le \).  Writing
  \begin{equation*}
    T =
    \begin{pmatrix}
      a & b \\
      0 & d
    \end{pmatrix}
    ,
  \end{equation*}
  where \( a, d \in (0, \infty) \) and \( b \in \bC \), this is equivalent
  to the equations
  \begin{gather*}
    d^2 (d^2 + \abs b^2) = a^4 d^4, \\
    bd (a^2 - \abs b^2 - d^2) = 0, \\
    d^2 \abs b^2 + (a^2 - \abs b^2)^2 = a^4 d^4.
  \end{gather*}
  If \( b \neq 0 \) the second equation gives \( d^2 = a^2 - \abs b^2 \)
  and the remaining equations are now both equivalent to \( a^2 d^2 = 1 \).
  So we have
  \begin{equation*}
    a^2 = \frac1{d^2},\quad \abs b^2 = \frac1{d^2} - d^2,\quad d \in (0, 1).
  \end{equation*}
  So we have a set of solutions parametrised by the punctured disc.  If \(
  b=0 \) then \( a=d=1 \), corresponding to the centre of the disc.

  Thus the quotient by the free \( \Un(2)_\Le \)-action on the fibre \(
  \phi^{-1}(0, L) \) is diffeomorphic to the open disc in \( \bC \).

  For general \( n \), consider \( S = L \oplus L' \) with \( L' \) an
  invertible \( (n-2)\times (n-2) \) matrix.  The analysis above shows that
  the fibre is determined by the upper triangular matrices \( Y = T^{-1} \)
  with positive entries on the diagonal such that \( Y^*SY \) is unitary
  and each such matrix determines a unique \( \Un(n)_\Le \)-orbit in the
  fibre.  The result follows by taking direct sums of solutions for the \(
  n=2 \) case with solutions for~\( L' \).
\end{proof}

We may also get fibres with positive-dimensional quotient by \( \Un(n)_\Le
\) if \( S \) is non-invertible.

\begin{theorem}
  \label{thm:zero}
  The quotient \( \phi^{-1}(0,0)/\Un(n)_\Le \) may be identified with the
  set of non-negative Hermitian matrices of rank at most~\( n/2 \).
\end{theorem}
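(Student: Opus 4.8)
The plan is to construct an explicit bijection and verify it directly. Taking \( \varepsilon = 0 \) in~\eqref{eq:phi-hk}, the fibre \( \phi^{-1}(0,0) \) is the set of pairs \( (A,B) \) with \( A^*A = BB^* \) and \( BA = 0 \). I would begin by noting that \( H := A^*A = BB^* \) is non-negative Hermitian and that \( (A,B) \mapsto H \) is manifestly unchanged by the substitution \( (A,B) \mapsto (UA, BU^{-1}) \), so it descends to a map \( \Phi \) from \( \phi^{-1}(0,0)/\Un(n)_\Le \) to the cone of non-negative Hermitian matrices. It then remains to pin down the image of \( \Phi \) and to prove \( \Phi \) injective.

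The image is controlled by rank counting. Since \( \ker(A^*A) = \ker A \) and \( \ker(BB^*) = \ker B^* \), we get \( \rank A = \rank B = \rank H =: r \); and \( BA = 0 \) means \( \operatorname{im} A \subseteq \ker B \), so \( r \le n - r \), i.e.\ \( r \le n/2 \). Conversely, given a non-negative Hermitian \( H \) of rank \( r \le n/2 \), diagonalise \( H = \sum_{j=1}^r \lambda_j e_j e_j^* \) with \( \lambda_j > 0 \) and \( e_1, \dots, e_n \) orthonormal, and put \( A = H^{1/2} \) and \( B = \sum_{j=1}^r \sqrt{\lambda_j}\, e_j e_{r+j}^* \), which makes sense precisely because \( 2r \le n \). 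A one-line computation gives \( A^*A = BB^* = H \) and \( BA = 0 \), so \( \Phi \) is onto the set of non-negative Hermitian matrices of rank at most \( n/2 \).

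For injectivity one must show \( \Un(n)_\Le \) acts transitively on each fibre of \( \Phi \). Suppose \( (A,B) \) and \( (A',B') \) both lie in \( \phi^{-1}(0,0) \) with \( A^*A = A'^*A' = H \), and write \( V = (\ker H)^\perp \). Then \( A \), \( A' \) annihilate \( V^\perp \), restrict to injections on \( V \), and satisfy \( \langle Av, Aw\rangle = \langle Hv, w\rangle = \langle A'v, A'w\rangle \), so \( Av \mapsto A'v \) is a well-defined isometry \( \operatorname{im} A \to \operatorname{im} A' \); the same argument applied to \( B^* \), \( B'^* \) (using \( BB^* = B'B'^* = H \)) gives an isometry \( \operatorname{im} B^* \to \operatorname{im} B'^* \). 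Because \( BA = 0 \) and \( B'A' = 0 \) we have \( \operatorname{im} A \perp \operatorname{im} B^* \) and \( \operatorname{im} A' \perp \operatorname{im} B'^* \), and each of these orthogonal sums has dimension \( 2r \le n \); so the two isometries fit together into a single isometry \( \operatorname{im} A \oplus \operatorname{im} B^* \to \operatorname{im} A' \oplus \operatorname{im} B'^* \), which extends to a unitary \( U \) of \( \bC^n \) since the complements both have dimension \( n - 2r \). Finally \( UA = A' \) and \( UB^* = B'^* \) (both sides annihilate \( V^\perp \) and agree on \( V \)), so \( (A', B') = (UA, BU^{-1}) \), and \( \Phi \) descends to the asserted bijection.

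The step I expect to be the main obstacle is this last one: arranging \( U \) to intertwine \( A \) with \( A' \) and \( B \) with \( B' \) at the same time. The key points that make it go through are that \( BA = 0 \) yields the stronger statement \( \operatorname{im} A \perp \operatorname{im} B^* \) (not merely \( \operatorname{im} A \subseteq \ker B \) with no control on the orthogonal complement), and that the rank bound \( r \le n/2 \) leaves exactly enough room in \( \bC^n \) to realise \( \operatorname{im} A \oplus \operatorname{im} B^* \) as an honest \( 2r \)-dimensional subspace and to extend the partial isometry to a genuine unitary.
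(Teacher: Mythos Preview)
Your proof is correct. Both you and the paper identify the non-negative Hermitian invariant (you take $H = A^*A$; the paper takes its square root $P$ via the polar decompositions $A = UP$, $B = PV$), and these parametrisations are interchangeable. The paper rewrites the complex equation as $PWP = 0$ with $W = VU$ unitary and reads off the rank bound from the requirement that $W$ send $\im P$ into $\ker P = (\im P)^\perp$; for injectivity it manipulates the unitary factors $U_i, V_i, W_i$ of two solutions to assemble the intertwining $u\in\Un(n)_\Le$. You bypass polar decomposition entirely and work directly with the images of $A$ and $B^*$, using the observation that $BA = 0$ is equivalent to $\im A \perp \im B^*$; this makes the patching of the two partial isometries and the extension to a global unitary transparent, and gives a slightly cleaner injectivity argument that never has to confront the non-uniqueness of the polar decomposition for singular matrices. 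Your explicit construction for surjectivity is also more concrete than the paper's existential one. The two routes are close in spirit but genuinely different in execution; yours is marginally more self-contained.
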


\begin{proof}
  Write \( (A,B) \in \phi^{-1}(0,0)\) as \( A = UP, B = QV \) where \( U, V
  \) are unitary and \( P, Q \) are non-negative Hermitian matrices (that
  is, we are taking left and right polar decompositions of \( A \) and \( B
  \)).  The real equation \( A^* A = BB^* \) now says \( P=Q \), and the
  complex equation becomes
  \begin{equation*}
    PVUP=0.
  \end{equation*}
  This holds precisely when \( \im P \) may be mapped by a unitary \( W=VU
  \) transformation into \( \ker P \), which is true if and only if \(
  \rank(P) \leqslant n/2 \).
  
  Given \( P \) non-negative Hermitian of rank at most \( n/2 \), it
  remains to show that there is a unique \( U(n)_\Le \)-orbit in \(
  \phi^{-1}(0,0) \) determined by~\( P \).  For \( (A_1,B_1), (A_2,B_2) \in
  \phi^{-1}(0,0) \) with \( A_1=U_1P, B_1=PV_1 \) and \( A_2=U_2P, B_2=PV_2
  \) for unitary matrices \( U_1,V_1,U_2,V_2 \) we have that \( W_1=V_1U_1
  \) and \( W_2=V_2U_2 \) both represent transformations that map \( \im P
  \) into \( \ker P = (\im P)^\bot\).

  Let \( X = \im P \), \( Y = W_1^{-1}\im P \) and \( Z = W_2^{-1}\im P \).
  Then \( Y,Z \leqslant X^\bot \) and \( W_2^{-1}W_1^{} \) maps \( Y \)
  isometrically on to~\( Z \), so there is a unitary transformation \(
  w\in\Un(n) \) with \( w|_X = \Id_X \) and \( w|_Y = W_2^{-1}W_1^{}|_Y \).
  Put \( u= U_1w^{-1}U_2^{-1} \) which is unitary.  Then \( B_1 = PV_1 =
  PW_1 U_1^{-1} = PW_2 wU_1^{-1} = B_2 u^{-1}\) and \( u A_2 = U_1 w^{-1} P
  = U_1 P = A_1 \), giving \( (A_1,B_1) = (u,1)\cdot (A_2,B_2) \), as
  required.
\end{proof}

The existence of fibres of~\( \phi \) of dimension greater than~\(n^2 \),
and more generally fibres whose quotient by~\( \Un(n)_\Le \) is of positive
dimension, is a phenomenon that does not occur in the cutting/modification
constructions for symplectic or hyperk\"ahler manifolds with torus actions,
or even in the case of symplectic manifolds with \( \Un(n) \) actions.
Indeed, as noted earlier, in these cases the action of the group on the
fibres is transitive.

This behaviour means that when we perform the \( \Un(n) \) hyperk\"ahler
modification, certain loci in \( M \) are being blown up.  In particular
the set \( \mu^{-1}(\varepsilon) \), where \( \varepsilon \) is the level
at which we perform the modification, is not simply collapsed by \( \Un(n)
\) in the modification, as \( \phi \) is not injective over the origin. As
noted later in the section, the corresponding region in \( \Mmod \)
includes \( \mu^{-1}(\varepsilon)/\Un(n) \) but also includes other strata
corresponding to nonzero elements of \( \phi^{-1}(0,0) \).  This contrasts
with the Abelian case or the symplectic \( \Un(n) \) case, where \( \phi \)
is injective over the origin.

\begin{remark}
  Using the techniques of the proof of Theorem~\ref{thm:zero}, one may
  check that \( \phi^{-1}(0,\lambda \Id) \) for each \( \lambda\ne0 \)
  consists exactly of one \( \Un(n)_\Le \)-orbit.
\end{remark}

\begin{remark}
  We saw above that the moment map \( \phi \) for the \( \Un(n)_\Ri \)
  action on \( X \) can have critical points, and fibres of larger than
  expected dimension, even on the locus where the action is free.  Again,
  this is a new feature that appears for non-Abelian hyperk\"ahler actions.

  In the symplectic case, the kernel of the differential of a moment map is
  just the symplectic orthogonal of the space \( \mathcal G \) of Killing
  fields for the action.  In the K\"ahler setting, this is of course the
  metric orthogonal \( (I\mathcal G)^\perp \) where \( I \) denotes the
  complex structure.  Hence on the locus where the action is free the rank
  is maximal because \( \mathcal G \) has the maximal dimension \( \dim G
  \).

  For hyperk\"ahler moment maps, the kernel is now the intersection of the
  symplectic orthogonals to \( \mathcal G \) relative to the K\"ahler forms
  \( \omega_I, \omega_J, \omega_K \); equivalently it is the metric
  orthogonal
  \begin{equation*}
    (I\mathcal G + J\mathcal G + K\mathcal G)^\perp
  \end{equation*}
  If the action is \emph{Abelian} then the moment map is invariant under
  the group action and hence \( \mathcal G \) is contained in the above
  orthogonal complement.  It follows that \( I\mathcal G, J\mathcal G \)
  and \( K\mathcal G \) are mutually orthogonal and the above sum is
  direct.  Again, we see that on the free locus the moment map has maximal
  rank.  In the non-Abelian case this can fail, leading to critical points
  on the free locus.  Note that the value of the moment map at such
  critical points is non-central.
\end{remark}

In contrast to the Abelian case, the moment map \( \phi \) is not
surjective.  For later use, we will prove a little more.

\begin{theorem}
  \label{thm:image}
  For each \( \varepsilon\in\lie z\otimes \bR^3 \) there exist values \(
  \varepsilon + (R,S) \) with \( (R,S) \in \su(n) + \Sl(n,\bC) \) that are
  not in the image of \( \phi \).
\end{theorem}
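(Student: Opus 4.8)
The plan is to reduce the question to a matrix equation on the locus where the complex moment value is invertible, and there to find an obstruction. Write the target value as $(i\Xi ,C)$ with $\Xi = c\,\Id - iR$ Hermitian (so $\tr\Xi = nc$) and $C = z\,\Id + S$ (so $\tr C = nz$); then $(i\Xi ,C)$ lies in the image of $\phi$ exactly when there is $(A,B)\in X$ with $A^*A - BB^* = 2\Xi$ and $BA = C$. If $C$ is invertible then so are $A$ and $B$; putting $P = A^*A\succ 0$ and $B = CA^{-1}$, the real equation becomes $2\Xi = P - CP^{-1}C^*$, and as $A$ ranges over $\GL(n,\bC)$ the matrix $P=A^*A$ ranges over all positive-definite Hermitian matrices. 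Hence, for invertible $C$, the attainable real parts form exactly the set $\mathcal R_C := \{\,P - CP^{-1}C^* : P\succ 0\,\}$, and it suffices to produce, for each prescribed pair $(nc,nz)$, an invertible $C$ with $\tr C = nz$ and a Hermitian $\Xi$ with $\tr\Xi = nc$ such that $2\Xi\notin\mathcal R_C$.

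Two structural features make this tractable. First, $\mathcal R_{\mu C} = \lvert\mu\rvert\,\mathcal R_C$ for $\mu\in\bC^{\times}$ (substitute $P\mapsto\lvert\mu\rvert P$), so it is enough to exhibit one invertible $C_0$ whose set $\mathcal R_{C_0}$ has proper closure and meets the complement of every hyperplane $\{\tr = \text{const}\}$ — scaling $C_0$ and, for the centre, adjusting $\Xi$ then handles all $\varepsilon$, with the degenerate case $z=0$ treated by taking $C_0$ traceless. Second, $\mathcal R_{C_0}$ is genuinely restricted precisely when $C_0$ is far from normal. For $n=2$ and $C_0 = \diag(1,-1)$ a direct computation with $P = \left(\begin{smallmatrix} a & \bar b\\ b & d\end{smallmatrix}\right)$, $\Delta = \det P$, gives
\begin{equation*}
  P - C_0 P^{-1}C_0 =
  \begin{pmatrix} a - d/\Delta & \bar b\,(1-1/\Delta)\\ b\,(1-1/\Delta) & d - a/\Delta\end{pmatrix},
\end{equation*}
whose off-diagonal entry has modulus $\lvert b\rvert\,\lvert 1-1/\Delta\rvert \le \tfrac12(a+d)\,\lvert 1-1/\Delta\rvert = \tfrac12\lvert\tr(P - C_0P^{-1}C_0)\rvert$, using $\lvert b\rvert\le\sqrt{ad}\le\tfrac12(a+d)$. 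Thus $\mathcal R_{C_0}\subseteq\{\,T : \lvert T_{12}\rvert\le\tfrac12\lvert\tr T\rvert\,\}$, a proper subset that misses points at every trace level, so any Hermitian $\Xi$ with $\lvert\Xi_{12}\rvert>\tfrac12\lvert\tr\Xi\rvert$ yields a value $(i\Xi,\diag(1,-1))$ outside the image. For $z\neq 0$ one runs the analogous computation with a non-normal invertible $C_0$ of non-zero trace (e.g.\ an upper-triangular matrix with a single large super-diagonal entry); for general $n$ one either iterates on a $2\times 2$ principal block via Schur complements or tracks the entry of $\mathcal R_{C_0}$ corresponding to the large off-diagonal entry of $C_0$.

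The main obstacle is controlling $\mathcal R_C$ uniformly in $n$: the condition $T\in\mathcal R_C$ is a matrix Riccati-type inverse problem whose solution set is not transparent, and the block/Schur-complement bookkeeping leaves slack. The cleanest uniform route I would pursue is a degeneration argument: take $C_\lambda = z\,\Id + \lambda N$ with $N$ a regular nilpotent (single Jordan block, traceless, rank $n-1$), so $C_\lambda$ is invertible for $z\neq 0$, and test whether $2c\,\Id\in\mathcal R_{C_\lambda}$; rescaling $P_\lambda = \lambda^2 Q_\lambda$ turns $P_\lambda - C_\lambda P_\lambda^{-1}C_\lambda^* = 2c\,\Id$ into $Q_\lambda - \widehat C_\lambda\, Q_\lambda^{-1}\,\widehat C_\lambda^{*} = 2c\lambda^{-2}\Id$ with $\widehat C_\lambda = \lambda^{-1}C_\lambda\to N$, so any accumulation point $Q_*$ of the $Q_\lambda$ satisfies $Q_* = N Q_*^{-1}N^*$, whose right-hand side has rank at most $n-1$ — contradicting $Q_*\succ 0$ for $\lambda$ large. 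The delicate point, where the real work lies, is excluding the scenario that the eigenvalues of $P_\lambda$ escape to different scales; I would rule this out by a priori bounds read off from the equation itself (for $c>0$ one has $2c\,\Id\preceq P_\lambda\preceq 2c\,\Id + \tfrac1{2c}C_\lambda C_\lambda^{*}$, and testing against the $C_\lambda^{*}$-eigenvector $e_n$ gives $(P_\lambda)_{nn}\ge c+\sqrt{c^2+\lvert z\rvert^2}$), followed by a compactness argument on the suitably normalised matrices, with the cases $c\le 0$ (and the borderline $z=0$ situation, which connects to the analysis behind Theorem~\ref{thm:zero}) handled by running the same argument with $B$ in place of $A$.
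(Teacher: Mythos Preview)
Your proposal is not a complete proof. The only case you establish rigorously is $n=2$ with $\operatorname{tr} C=0$: there the computation with $C_0=\diag(1,-1)$ correctly gives $\mathcal R_{C_0}\subseteq\{T:\lvert T_{12}\rvert\le\tfrac12\lvert\operatorname{tr} T\rvert\}$. Everything beyond this is a sketch. The extension to $z\neq 0$ (\qq{runs the analogous computation with a non-normal invertible $C_0$}) is asserted but not carried out, and neither is the extension to general $n$ (\qq{iterates on a $2\times 2$ principal block via Schur complements}). The degeneration argument you offer as the \qq{cleanest uniform route} has, as you yourself acknowledge, a genuine gap: the a~priori bounds $2c\,\Id\preceq P_\lambda\preceq 2c\,\Id+\tfrac1{2c}C_\lambda C_\lambda^*$ allow the eigenvalues of $P_\lambda$ to range between $O(1)$ and $O(\lambda^2)$, so no rescaling of $P_\lambda$ is forced to converge to anything positive definite, and your lower bound on $(P_\lambda)_{nn}$ does not close this. (There is also an exponent mismatch: with $\widehat C_\lambda=\lambda^{-1}C_\lambda$ one should take $P_\lambda=\lambda Q_\lambda$, giving $Q_\lambda-\widehat C_\lambda Q_\lambda^{-1}\widehat C_\lambda^*=2c\lambda^{-1}\Id$, not $2c\lambda^{-2}\Id$.)

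Two structural choices make your route harder than necessary. First, you do not exploit the freedom to rotate the complex structures on $X$ to normalise $\varepsilon$; the paper rotates to $\varepsilon=(0,\lambda\,\Id)$, eliminating one parameter outright. Second, and this is the real point, you insist on \emph{invertible} $C$ and are then forced into a Riccati-type analysis of $\mathcal R_C$. The paper instead takes the complex value $M$ of rank exactly $n-1$: zero first column, nonzero super-diagonal, zeros above the super-diagonal, and $\operatorname{tr} M=n\lambda$. Then $BA=M$ together with $A^*A=BB^*$ forces $\rank A=\rank B=n-1$; the shape of $M$ makes the last $n-1$ columns of $A$ and the first $n-1$ rows of $B$ linearly independent, and a short triangular argument shows the first column of $A$ must vanish, contradicting $A^*A=BB^*$ in the $(1,1)$-entry. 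This is elementary, works uniformly in $n$, and needs no limits or compactness.
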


\begin{proof}
  Rotating the choice of complex structures on \( X \), we may assume that
  \( \varepsilon = (0,\lambda \Id) \) for some complex number~\( \lambda
  \).

  Consider \( (0, M) \in \un(n) \oplus \gl(n, \bC) \), where \( M =
  (m_{ij}) \) is any matrix that has:
  \begin{compactenum}
  \item\label{item:first} first column zero: \( m_{i1}= 0 \) for all \(
    i=1,\dots,n \),
  \item\label{item:nz} non-zero entries immediately above the diagonal: \(
    m_{i,i+1} \ne 0 \), \( i=1,\dots,n-1 \),
  \item\label{item:ze} all other entries above the diagonal zero: \(
    m_{i,j} = 0 \) for \( j>i+1 \), and
  \item\label{item:tr} the trace \( m_{22}+\dots+m_{nn}\) equals
    \( n\lambda \).
  \end{compactenum}
  Note that \( M \) has rank \( n-1 \).

  If \( \phi(A,B) = (0,M) \), the equations are
  \begin{equation*}
    A^*A - BB^* = 0,\quad BA = M.
  \end{equation*}
  The first of these implies, using \( \ker A = \ker A^*A \) etc., that \(
  A \) and~\( B \) have equal rank.  By the second equation, this rank must
  be \( n-1 \).  Write
  \begin{equation*}
    B =
    \begin{pmatrix}
      \mathbf v_1^T \\
      \vdots\\
      \mathbf v_n^T
    \end{pmatrix}
    ,\quad
    A =
    \begin{pmatrix}
      \mathbf w_1 & \dots & \mathbf w_n
    \end{pmatrix}
  \end{equation*}
  for column vectors \( \mathbf v_i \) and \( \mathbf w_j \).  Now we have
  \begin{equation*}
    BA =
    \begin{pmatrix}
      \mathbf v_i^T\mathbf w_j
    \end{pmatrix}
    = M.
  \end{equation*}
  Condition \itref{item:nz} gives that \( \mathbf v_1,\dots,\mathbf v_{n-1}
  \) and \( \mathbf w_2,\dots,\mathbf w_n \) are non-zero.  We claim that
  each of these collections of \( n-1 \) vectors is linearly independent.  To
  see this, suppose \( 0 = a_2\mathbf w_2+\dots+a_n\mathbf w_n \).  Then
  multiplying by \( \mathbf v_1^T \) and using \itref{item:ze}, we get \( 0
  = a_2\mathbf v_1^T\mathbf w_2 \) and so \( a_2=0 \).  Multiplying
  successively with \( \mathbf v_i^T \), \( i=2,\dots,n-1 \) then shows \(
  a_j=0 \) for each \( j \), as required.  A similar argument gives the
  linear independence of \( \mathbf v_1,\dots,\mathbf v_{n-1} \).

  Since \( \rank A = n-1 \), we have that \( \mathbf w_1 \) is a linear
  combination of the vectors \( \mathbf w_2,\dots,\mathbf w_n \).
  Multiplying this combination successively by \( \mathbf v_i^T \), \(
  i=1,\dots,n-1 \), and using \itref{item:first}, we find this combination
  is zero.  Thus \( \mathbf w_1=0 \) and \( A^*A \) has leading entry~\( 0
  \).  The corresponding entry in \( BB^* \) is \( \norm{\mathbf v_1}^2 \),
  which is non-zero.  Thus there is no \( (A,B) \) mapping to \( (0,M) \)
  under \( \phi \).  Taking \( R=0 \) and \( S = M - \lambda\Id \), we have
  by \itref{item:tr} \( S \in \Sl(n,\bC) \) and the result follows.
\end{proof}

Note that diagonal matrices of the form \( D = \diag(0,d_2,\dots,d_n) \)
may be obtained as the limit of the matrices \( M \) in the above proof.
However the point \( (0,D) \) is in the image of~\( \phi \), putting \(
d_1=0 \) and writing \( d_j = e^{i\theta_j}r_j^2 \) with \( r_j\geqslant 0
\), we have \( (0,D) = \phi(UP,P) \) where \( U = \diag(e^{i\theta_j}) \)
and \( P=\diag(r_j) \).  Thus we have:

\begin{corollary}
  The image of \( \phi \) is not open.  \qed
\end{corollary}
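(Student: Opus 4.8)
The plan is to exhibit a single point lying in the image of~\( \phi \) that fails to be an interior point. Guided by the remark preceding the statement, I would use a value of the form \( (0,D)\in\un(n)\oplus\gl(n,\bC) \) with \( D=\diag(0,d_2,\dots,d_n) \) diagonal; the simplest instance is \( D=0 \), for which \( \phi(0,0)=(0,0) \).

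The argument then has two ingredients. First, \( (0,D) \) lies in the image: writing \( d_j=e^{i\theta_j}r_j^2 \) with \( r_j\geqslant0 \) and \( d_1=0 \), one checks that \( \phi(UP,P)=(0,D) \) for \( U=\diag(e^{i\theta_j}) \) and \( P=\diag(r_j) \), exactly as noted just above. Second, arbitrarily close to \( (0,D) \) there are values \emph{not} in the image: for \( t>0 \) let \( M_t \) be the upper-triangular matrix with diagonal \( (0,d_2,\dots,d_n) \), entries equal to \( t \) immediately above the diagonal, and zeros elsewhere. Then \( M_t \) satisfies conditions \itref{item:first}--\itref{item:ze} of Theorem~\ref{thm:image}: its first column vanishes, its entries immediately above the diagonal equal \( t\ne0 \), and all its other entries above the diagonal vanish. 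Taking \( \lambda=\tfrac1n\operatorname{tr}M_t \) it satisfies condition~\itref{item:tr} as well, so the computation in the proof of Theorem~\ref{thm:image} shows \( (0,M_t) \) is not in the image of~\( \phi \). Since \( M_t\to D \) as \( t\to0^+ \), we have \( (0,M_t)\to(0,D) \); thus no neighbourhood of \( (0,D) \) is contained in the image of~\( \phi \), while \( (0,D) \) itself belongs to it, so this image is not open.

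Every step is assembled from facts already established, so there is no substantial obstacle. The one point worth checking carefully is that the non-realisability argument in the proof of Theorem~\ref{thm:image} never invokes the trace normalisation~\itref{item:tr} (it is used there only in the final line, to put the excluded value in the form \( \varepsilon+(R,S) \) with \( (R,S)\in\su(n)+\Sl(n,\bC) \)), so that the matrices \( M_t \), whose traces are not prescribed, are indeed all excluded from the image.
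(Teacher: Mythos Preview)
Your proposal is correct and follows essentially the same approach as the paper: the paragraph immediately preceding the corollary already observes that the diagonal matrices \( D=\diag(0,d_2,\dots,d_n) \) lie in the image of~\( \phi \) yet arise as limits of the matrices~\( M \) from Theorem~\ref{thm:image}, which do not. Your family \( M_t \) simply makes this limiting process explicit, and your closing remark that condition~\itref{item:tr} plays no role in the non-realisability argument is a correct and useful clarification.
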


The above results are easiest to summarise when the action of \( \Un(n) \)
on \( M \) is free.  An example of such a hyperk\"ahler manifold is the
space \( T^*\GL(n,\bC) \) discussed in the final section of the paper.

\begin{theorem}
  Suppose \( M \) is a hyperk\"ahler manifold with free tri-Hamiltonian \(
  \Un(n) \)-action, \( n>1 \), with moment map \( \mu \).  Let \( \Mmod \)
  be the hyperk\"ahler modification at level \( \varepsilon \) defined in
  Definition~\ref{def:Hom-mod}.  Then \( \Mmod \) is a smooth hyperk\"ahler
  manifold with tri-Hamiltonian \( \Un(n) \)-action.  Moreover, in \( \Mmod
  \)
  \begin{enumerate}
  \item\label{item:remove} the set \( \mu^{-1}([\un(n)\otimes\bR^3]
    \setminus [\varepsilon -\im\phi]) \) is removed from \( M \),
  \item\label{item:fibration} there is an open set \( \mathcal U\subset
    \un(n)\otimes \bR^3 \) and a manifold of dimension \( \dim M + n^2 \)
    that fibres over both \( \mu^{-1}(\mathcal U) \) and \(
    \hkmod{(\mu^{-1}(\mathcal U))} \); up to finite covers both of these
    fibrations are principal \( \Un(n) \)-bundles,
  \item\label{item:quotient} \( \Mmod \) contains a copy of the
    hyperk\"ahler quotient of \( M \) at level \( \varepsilon \),
  \item\label{item:blowup} there is a set \( \mathcal Z\subset
    \un(n)\otimes \bR^3 \) such that \( \hkmod{(\mu^{-1}(\mathcal Z))} \)
    has larger dimension than \( \mu^{-1}(\mathcal Z) \) and is `blown-up'.
  \end{enumerate}
\end{theorem}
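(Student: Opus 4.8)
\emph{Set-up and smoothness.} The plan is to run everything through the double fibration~\eqref{eq:double}, after first settling smoothness. Write \( \Phi=\mu+\phi\colon M\times X\to\un(n)\otimes\bR^3 \) for the moment map of \( \Un(n) \) acting diagonally (the given action on~\( M \), the \( \Un(n)_\Ri \)-action on~\( X \)), so \( N=\Phi^{-1}(\varepsilon) \) and \( \Mmod=N/\Un(n) \); note \( \Un(n) \) preserves~\( N \) since \( \varepsilon \) is central. As the action on~\( M \) is free, so is the diagonal action on \( M\times X \), and the only point to check is that \( \varepsilon \) is a regular value of~\( \Phi \). By the description of the kernel of a hyperk\"ahler moment map recalled in the remark after Theorem~\ref{thm:zero}, \( \ker d\Phi_{(m,x)}=(I\mathcal G+J\mathcal G+K\mathcal G)^{\perp} \) with \( \mathcal G=T_{(m,x)}\bigl(\Un(n)\cdot(m,x)\bigr) \). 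Moment-map equivariance gives \( g(I\xi_A,J\xi_B)=\pm\langle\Phi(m,x),[A,B]\rangle \), and cyclically; since each component of \( \varepsilon\in\lie z\otimes\bR^3 \) is central it annihilates \( [\un(n),\un(n)] \), so on~\( N \) the spaces \( I\mathcal G,J\mathcal G,K\mathcal G \) are mutually orthogonal, each of dimension \( n^2 \), whence \( d\Phi_{(m,x)} \) is onto. (This is the converse of the remark's observation that \( \phi \) acquires critical points on its free locus only at \emph{non-central} values, which cannot lie in \( \Phi^{-1}(\varepsilon) \).) Thus \( N \) is a smooth manifold of dimension \( \dim M+n^{2} \) with free \( \Un(n) \)-action, \( \Mmod \) is a smooth hyperk\"ahler manifold of dimension \( \dim M \), and the \( \Un(n)_\Le \)-action on~\( X \), commuting with everything, descends; its moment map is induced from the \( \Un(n)_\Le \)-moment map on~\( X \), which by the symmetry of the set-up is \( \Un(n)_\Ri \)-invariant, so the induced action is tri-Hamiltonian.

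\emph{Removing and fibring (items~\ref{item:remove}--\ref{item:fibration}).} The fibre of \( \pi_1\colon(m,x)\mapsto m \) over~\( m \) is a copy of \( \phi^{-1}(\varepsilon-\mu(m)) \), so \( \pi_1(N)=\mu^{-1}(\varepsilon-\im\phi) \) and its complement in~\( M \) — exactly the set in~\ref{item:remove} — carries no point of~\( N \): it is removed. For~\ref{item:fibration}, let \( \mathcal R\subset\un(n)\otimes\bR^3 \) be the set of regular values of~\( \phi \) whose \( \gl(n,\bC) \)-component is invertible. This is open and non-empty: the regular values form a dense open set, since the critical set of~\( \phi \) is closed, semialgebraic, of real codimension two (as established above), so its image is semialgebraic of measure zero (Sard) hence of positive codimension; and \( \{S\text{ invertible}\} \) is open and non-empty — e.g. \( (-\tfrac32 i\,\Id,\,2\,\Id)=\phi(\Id,2\,\Id) \) is such a value, its \( \phi \)-preimages being \( (W^{*},2W) \) with \( W\in\Un(n) \), for which \( B^{-1}A^{*}=\tfrac12\Id \) has no eigenvalue pair \( \lambda,-1/\overline\lambda \). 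Put \( \mathcal U=\varepsilon-\mathcal R \) and \( N_{\mathcal U}=\pi_1^{-1}(\mu^{-1}(\mathcal U)) \), an open submanifold of~\( N \) of dimension \( \dim M+n^{2} \). Then \( \pi_2|_{N_{\mathcal U}} \) is the quotient by the free diagonal \( \Un(n) \), hence a principal \( \Un(n) \)-bundle onto \( \hkmod{(\mu^{-1}(\mathcal U))} \); while the fibre of~\( \pi_1 \) over \( m\in\mu^{-1}(\mathcal U) \) is \( \phi^{-1}(\varepsilon-\mu(m)) \) with \( \varepsilon-\mu(m)\in\mathcal R \), which by Theorem~\ref{thm:free} is a finite disjoint union of free \( \Un(n)_\Le \)-orbits, and \( \pi_1|_{N_{\mathcal U}} \) is a \( \Un(n)_\Le \)-equivariant submersion, so up to a finite cover of \( \mu^{-1}(\mathcal U) \) it too is a principal \( \Un(n) \)-bundle.

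\emph{The reduction of~\( M \) and the blow-up (items~\ref{item:quotient}--\ref{item:blowup}).} Since \( \phi(0,0)=0 \), we have \( \mu^{-1}(\varepsilon)\times\{0\}=N\cap(M\times\{0\}) \), and using \( d\phi_{(0,0)}=0 \) one checks \( M\times\{0\} \) is transverse to~\( N \). As \( M\times\{0\} \) is a \( \Un(n)_\Ri \)-invariant hyperk\"ahler submanifold of \( M\times X \) on which \( \phi\equiv0 \), its hyperk\"ahler reduction at~\( \varepsilon \) — which is precisely the hyperk\"ahler quotient \( \mu^{-1}(\varepsilon)/\Un(n) \) of~\( M \), itself smooth by the centrality argument applied to~\( \mu \) — embeds in~\( \Mmod \) as a hyperk\"ahler submanifold, namely \( \{[(m,0)]:\mu(m)=\varepsilon\} \); this is~\ref{item:quotient}. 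For~\ref{item:blowup} take \( \mathcal Z=\{\varepsilon\} \). Then \( \pi_1^{-1}(\mu^{-1}(\varepsilon))=\mu^{-1}(\varepsilon)\times\phi^{-1}(0,0) \), so \( \hkmod{(\mu^{-1}(\varepsilon))}=\mu^{-1}(\varepsilon)\times_{\Un(n)}\phi^{-1}(0,0) \), a (stratified) bundle over \( \mu^{-1}(\varepsilon)/\Un(n) \) with fibre \( \phi^{-1}(0,0) \). By Theorem~\ref{thm:zero}, \( \phi^{-1}(0,0)/\Un(n)_\Le \) is the set of non-negative Hermitian matrices of rank at most \( n/2 \), which for \( n>1 \) has positive dimension; hence \( \dim\phi^{-1}(0,0)>n^{2} \) and therefore \( \dim\hkmod{(\mu^{-1}(\varepsilon))}-\dim\mu^{-1}(\varepsilon)=\dim\phi^{-1}(0,0)-n^{2}>0 \). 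The reduction of~\( M \) from~\ref{item:quotient} sits in this locus as the point \( 0\in\phi^{-1}(0,0) \) of each fibre, so \( \mu^{-1}(\mathcal Z) \) is genuinely blown up in~\( \Mmod \).

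\emph{Main obstacle.} The crux is the smoothness step: unlike the Abelian or symplectic cases, \( \mu+\phi \) (indeed \( \mu \) alone) may fail to be a submersion even on its free locus, and the argument turns entirely on the fact that this failure is confined to non-central values, which the modification level \( \varepsilon\in\lie z\otimes\bR^3 \) avoids. A subordinate nuisance is that \( \phi \) is not proper, so in~\ref{item:fibration} one must take care to extract an honest open set \( \mathcal U \) of good values rather than merely a dense one, and the ``principal bundle'' statement over \( \mu^{-1}(\mathcal U) \) only holds up to the finite covers supplied by Theorem~\ref{thm:free}.
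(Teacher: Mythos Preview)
Your proof is correct and follows the same route as the paper's, which is extremely terse: the paper simply invokes freeness of the diagonal action plus centrality of~\( \varepsilon \) for smoothness, gives the explicit formula for the \( \Un(n)_\Le \)-moment map, and then dispatches \itref{item:remove}--\itref{item:blowup} in one line each by pointing to Theorems~\ref{thm:image}, \ref{thm:free}, the identification \( (m,0)\leftrightarrow m \), and the existence of large fibres, respectively. You supply the details the paper omits --- the orthogonality argument for \( I\mathcal G,J\mathcal G,K\mathcal G \) at central level, the semialgebraic bookkeeping and an explicit regular value for \itref{item:fibration}, the transversality check for \itref{item:quotient}, and the concrete choice \( \mathcal Z=\{\varepsilon\} \) via Theorem~\ref{thm:zero} --- but the architecture is identical.
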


\begin{proof}
  The diagonal action of \( \Un(n) \) on \( M\times X \) is free, so
  reduction at a central value \( \varepsilon \) gives a smooth
  hyperk\"ahler quotient.  The moment map~\( \phi_\Le \) for \( \Un(n)_\Le
  \) on \( X \) descends to the quotient giving a moment map for the
  induced \( \Un(n) \)-action; explicitly
  \begin{equation*}
    \phi_\Le(A,B) = (\tfrac12(B^*B-AA^*,AB)).
  \end{equation*}

  Part~\itref{item:remove} is immediate: by
  Theorem~\ref{thm:image} the set being removed can be nonempty.

  For \itref{item:fibration}, the open set \( \mathcal U \) is given by
  Theorem~\ref{thm:free} and the comment immediately following it.

  The hyperk\"ahler quotient of \( M \) at level \( \varepsilon \) is \(
  \mu^{-1}(\varepsilon)/\Un(n) \) may be identified with the quotient of
  points \( (m,p)\in M\times X \), with \( \mu(m)=\varepsilon \) and \( p=0
  \) by the diagonal \( G \)-action, so is a subset of \( \Mmod \).  This
  gives part~\itref{item:quotient}.

  The set \( \mathcal Z \) in \itref{item:blowup} consists of those points
  where the fibre of \( \phi \) has dimension greater than \( n^2 \).
\end{proof}

We note that the intertwining of the actions on \( M \) and \( X \) imply
that the \( \Un(n) \)-action on \( \Mmod \) can be effective.  This is
despite the fact that the diagonal \( \Un(1) \)-subgroup of \( \Un(n)_\Le
\times \Un(n)_\Ri \) acts trivially on~\( X \).

\begin{example}
  \label{ex:ADHM}
  Let us consider an example of our construction when \( M \) is flat.  We
  take \( M = \Hom_\bC (\mathbb C^n, \bC^n) \oplus \Hom_\bC (\bC^n, \bC^n)
  \), but this time with \( \Un(n) \) acting by conjugation.  The moment
  map \( \mu = (\mu^\bR, \mu^\bC) \) is given by
  \begin{equation*}
    \mu \colon (B_1, B_2) \mapsto ( [ B_1, B_1^*] + [B_2, B_2^*], [B_1, B_2])
  \end{equation*}
  The modification \( \Mmod \) at level \( \varepsilon \) will now be the
  quotient of the space
  \begin{equation*}
    \{\, (B_1, B_2, A, B) : [ B_1, B_1^*] + [B_2, B_2^*] + A^* A - BB^* =
    \varepsilon^\bR,
    [B_1, B_2] + BA = \varepsilon^\bC \,\}
  \end{equation*}
  by the action
  \begin{equation*}
    (B_1, B_2, A, B) \sim (gB_1g^{-1}, gB_2g^{-1}, Ag^{-1}, gB),\qquad g
    \in \Un(n)
  \end{equation*}
  This is just the deformed instanton space \( {\mathcal M}_{\varepsilon}
  (n,n) \) of Nakajima (see for example \cite{Nakajima:Hilbert}).
\end{example}

\begin{remark}
  Consider taking the hyperk\"ahler quotient of \( X \) by the central~\(
  \Un(1) \), which is the same for both \( \Un(n)_\Le \) and \( \Un(n)_\Ri
  \).  If we choose a non-zero central level then we obtain the
  hyperk\"ahler structure found by Calabi on \( Y = T^*\CP(m) \) in the
  special case \( m=n^2-1 \).  This space inherits tri-holomorphic actions
  of \( \SU(n)_\Le \) and \( \SU(n)_\Ri \), so may be used to construct
  modifications of hyperk\"ahler manifolds with \( \SU(n) \) symmetry.  We
  see all the features described above.  In particular,
  Theorem~\ref{thm:image} shows that parts of \( M \) are cut away in this
  process.  The \( \SU(n) \)-modification is less flexible than the \(
  \Un(n) \) case, since one can only reduce at the level \( \varepsilon = 0
  \).
\end{remark}

\begin{remark}
  As in the symplectic case \cite{Weitsman:cuts}, we may generalise the
  modification construction by taking \( X \) to be hyperk\"ahler with a \(
  H \times G \) action.  Now, performing our construction will give a
  hyperk\"ahler space with an action of \( H \) rather than \( G \).  For
  example, we may take \( H = \Un(r) \) and \( X = \Hom_\bC (\bC^n, \mathbb
  C^r) \oplus \Hom_\bC (\bC^r, \bC^n) \).  The moment map \( \phi \) is as
  in equation~\eqref{eq:phi-hk}, but \( A, B \) are now \( r \times n \)
  and \( n \times r \) matrices respectively.

  Applying this to \( M \) as in example \ref{ex:ADHM} gives the general
  Nakajima space \( {\mathcal M}_{\varepsilon}(r,n) \).
\end{remark}

\section{A gauge theory modification}
\label{sec:gauge}

In this section we describe another possible choice of~\( X \).  This is
less involved topologically, but it does gives a metric deformation and it
applies to all compact symmetry groups~\( G \).

Recall from \cite{Kronheimer:cotangent} that if \( G \) is compact then the
cotangent bundle \( T^* G_\bC \) of the complexification of \( G \) carries
a complete hyperk\"ahler structure preserved by an action of \( G \times G
\).  Kronheimer proved this by identifying the cotangent bundle with the
moduli space of solutions to Nahm's equations
\begin{equation*}
  \frac{d T_i}{dt} + [T_0, T_i] = [T_j, T_k],
\end{equation*}
\( (i,j,k) \) a cyclic permutation of \( (123) \), where the \( T_a \) are
smooth maps from \( [0,1] \) to \( \g \).  The moduli space is formed by
identifying any two such solutions which are equivalent under the action of
the restricted gauge group
\begin{equation*}
  \mathsf G_0^0 = \{\, g \colon [0,1] \mapsto G : g(0)=g(1)= \Id \,\}
\end{equation*}
by
\begin{equation*}
  T_0 \mapsto gT_0 g^{-1} - \frac{dg}{dt} g^{-1},\qquad T_i \mapsto g T_i
  g^{-1} \quad (i=1,2,3).
\end{equation*}
Denoting by \( \mathsf G \) the group of gauge transformations \( g \colon
[0,1] \rightarrow G \) with no endpoint restrictions, we have an action of
\( \mathsf G/\mathsf G_0^0 \cong G \times G \) on the moduli space
preserving the hyperk\"ahler structure.  Denote by \( G_\Le, G_\Ri \)
respectively the copies of \( G \) corresponding to gauge transformations
equal to the identity at \( t=1 \) (resp., \( t=0 \)).  We showed in
\cite{Dancer-Swann:compact-Lie} that \( G_\Le \) and \( G_\Ri \) act
freely.

It is useful to observe that a point in the moduli space may be gauged by a
unique transformation \( g \in G_\Le \) to a configuration with \( T_0 =0
\).  (We just solve the linear equation \( dg/dt = g T_0 \)).  Now \( (T_1,
T_2, T_3) \) satisfy the reduced Nahm's equations
\begin{equation*}
  \frac{d T_i}{dt} = [T_j, T_k],\qquad\text{\( (i,j,k) \) cyclic  permutation
  of \( (123) \)}
\end{equation*}
and solutions to these equations are determined by \(
(T_1(1),T_2(1),T_3(1)) \).

Given a hyperk\"ahler \( G \)-manifold \( M \), we can form a modification
\( \Mmod \) by taking \( X \) to be \( T^* G_\bC \) with the above \( G
\times G \) action.

In \cite{Dancer-Swann:compact-Lie} we proved that the moment map \( \psi \)
for the \( G_\Ri \) action is given by evaluation of \( (T_1, T_2, T_3) \)
at \( t=1 \).  Note that \( \psi \) is \( G_\Ri \)-equivariant and \( G_\Le
\)-invariant.  The image of~\( \psi \) is an open set \( U_{\g} \) in \(
\g^* \otimes \bR^3 \), consisting of the triples \( (X_1, X_2, X_3) \) such
that there exists a solution to the Nahm equations smooth on \( [0,1] \)
with \( T_i(1) = X_i \), \( (i=1,2,3) \).  The results of
\cite{Dancer-Swann:compact-Lie} show that \( U_{\g} \)~is star-shaped with
respect to the origin.  This is because the Nahm equations are preserved
under the affine reparametrisation \( T_t \mapsto a T_i(at + 1-a) \), \( 0
< a < 1 \), which scales \( T_i(1) \) by~\( a \) while preserving the
condition of smoothness on~\( [0,1] \).  Moreover \( U_{\g} \) is a proper
subset of \( \g^* \otimes \bR^3 \) if \( G \)~is non-Abelian.  This is
because we may find solutions to the Nahm equations that are smooth at \(
t=1 \) but blow up at some point in the interior of \( [0,1] \).

From above, given \( \psi( T_0, T_1, T_2, T_3) \) we know the solutions to
the reduced Nahm equations are gauge equivalent to \( (T_0, T_1, T_2, T_3)
\), hence we know \( (T_i) \) up to the (free) \( G_\Le \) action.  So \(
\psi \colon X \rightarrow U_{\g} \subset \g^* \otimes \bR^3 \) is a \( G
\)-fibration, and this fibration is trivial (e.g.\ because \( U_{\g} \) is
star-shaped).

We see that the modification of \( M \) using \( X = T^* G_\bC \) may be
viewed topologically as removing the complement of \( \mu^{-1}(-U_{\g} +
\varepsilon) \) from~\( M \).  We do not perform any collapsing because
there are no special fibres of~\( \psi \).  Note that the metric on \(
\Mmod \) will be complete as long as \( M \) is complete.

Note that if \( G = S^1 \) then \( X \) is just \( T^* \bC^* \), which can
be identified with \( \bR^3 \times S^1 \) with the flat hyperk\"ahler
metric.  As \( G \) is now Abelian, constant gauge transformations act
trivially, so the \( G \times G \)-action on~\( X \) just collapses to a \(
G \)-action which is rotation of the \( S^1 \) factor.  We may form the
modification \( \Mmod \) in the same way as in \cite{Dancer-S:mod}.  Now \(
\psi \) is just projection of this trivial \( S^1 \)-bundle onto \( \bR^3
\), so \( \Mmod \) is diffeomorphic to \( M \).  However the metric on \(
\Mmod \) may be different from that on \( M \).  For example, if \( M \) is
flat \( \bR^4 \) then \( \Mmod \) is \( \bR^4 \) with the Taub-NUT metric.
So we may view \( \Mmod \) as a metric deformation of \( M \) (note that
Bielawski \cite{Bielawski:tri-Hamiltonian} has an alternative way of
expressing Taub-NUT deformations if \( M \) is \( 4n \)-dimensional with
the action of an \( n \)-dimensional Abelian group).  As any compact \( G
\) contains such circle subgroups we find

\begin{theorem}
  Let \( M \) be a hyperk\"ahler manifold with tri-Hamiltonian \( G
  \)-action and moment map~\( \mu \), for some compact Lie group~\( G \).
  Then the modification of \( M \) via \( X = T^*G_{\bC} \) at a central
  level~\( \varepsilon \) is diffeomorphic to~\(
  \mu^{-1}(-U_{\g}+\varepsilon) \) but need not be isometric to this
  subset of \( M \).  If \( M \) is complete, then this modification is
  also complete.\qed
\end{theorem}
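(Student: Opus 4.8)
The plan is to realise the claimed diffeomorphism inside the double fibration picture~\eqref{eq:double}, where \( \pi_1\colon N\to M \), \( \pi_2\colon N\to\Mmod \) and \( N = \{\,(m,x) : \mu(m) + \psi(x) = \varepsilon\,\} \), with \( \psi \) the moment map for the \( G_\Ri \)-action on \( X = T^*G_\bC \).  Since the image of~\( \psi \) is the open set~\( U_\g \), the relation \( \psi(x) = \varepsilon - \mu(m) \) forces \( \mu(m) \in -U_\g + \varepsilon \), so \( \pi_1 \) has image \( \mu^{-1}(-U_\g + \varepsilon) \); as \( \varepsilon \) is central this is a \( G \)-invariant open subset of~\( M \), and \( N \) carries two commuting free actions: that of~\( G_\Le \) on the \( X \)-factor alone, and the diagonal action of~\( G \) (the given action on~\( M \), and \( G_\Ri \) on~\( X \)).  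Because \( G_\Ri \) acts freely on \( T^*G_\bC \), the diagonal action on \( M \times X \) is free, so \( \Mmod = N/G \) is a smooth hyperk\"ahler manifold and \( \pi_2 \) is a principal \( G \)-bundle.

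The core construction is a canonical section of~\( \pi_1 \).  For \( \xi \in U_\g \) let \( s(\xi) \in X \) be the unique solution of the reduced Nahm equations with \( T_0 = 0 \) and \( (T_1,T_2,T_3)(1) = \xi \); this exists by the definition of~\( U_\g \), is unique because reduced solutions are determined by their endpoint, and defines a section of~\( \psi \), whose fibres are exactly the \( G_\Le \)-orbits.  Put \( \sigma(m) = (m, s(\varepsilon - \mu(m))) \), the point of~\( N \) over~\( m \) whose \( X \)-component is in \( T_0 = 0 \) form, and \( \Phi = \pi_2 \circ \sigma \colon \mu^{-1}(-U_\g + \varepsilon) \to \Mmod \).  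To see \( \Phi \) is surjective: given \( (m,x) \in N \), pick a Nahm representative of~\( x \) and solve \( \dot g = gT_0 \), \( g(0) = \Id \); then~\( g \) represents an element \( h \in G_\Ri \) with \( h \cdot x \) in \( T_0 = 0 \) form, hence \( h \cdot x = s(\varepsilon - \mu(h\cdot m)) \) by centrality of~\( \varepsilon \) and equivariance of~\( \mu \), so the diagonal \( h \)-action sends \( (m,x) \) to~\( \sigma(h\cdot m) \).  For injectivity: if \( h \cdot \sigma(m_1) = \sigma(m_2) \) then \( h\cdot m_1 = m_2 \) and~\( h \) carries the \( T_0 = 0 \) configuration \( s(\varepsilon-\mu(m_1)) \) to another \( T_0 = 0 \) configuration; but for a representative \( g_h \) of \( h \in G_\Ri \) with \( g_h(0) = \Id \) the gauge transformation \( k \in \mathsf G_0^0 \) restoring \( T_0 = 0 \) is forced to satisfy \( k g_h \equiv \Id \), i.e.\ \( k = g_h^{-1} \), which lies in \( \mathsf G_0^0 \) only when \( h = \Id \); hence \( m_1 = m_2 \).

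To promote the bijection~\( \Phi \) to a diffeomorphism I would prove that \( \Sigma = \sigma(\mu^{-1}(-U_\g+\varepsilon)) \subset N \) is transverse and complementary to the diagonal orbits.  As \( \dim \Sigma = \dim M \), the orbits have dimension \( \dim G \), and \( \dim N = \dim M + \dim G \), it suffices to show that \( T\Sigma \) meets the tangent space of a diagonal orbit only in~\( 0 \).  This is the step I expect to be the main obstacle.  A tangent vector to~\( \Sigma \) always has its \( X \)-component tangent to the \( T_0 = 0 \) locus \( s(U_\g) \), whereas the fundamental vector field of a nonzero \( A \in \g \) for the \( G_\Ri \)-action at a \( T_0 = 0 \) point is never tangent to \( s(U_\g) \): the \( G_\Ri \)-action moves \( T_0 \) by a term \( -\dot g g^{-1} \) that cannot be cancelled by an infinitesimal \( \mathsf G_0^0 \) gauge transformation of the configuration \( T_0 = 0 \)---this is the linearised form of the injectivity argument.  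Hence \( \pi_2|_\Sigma \) is a local diffeomorphism onto~\( \Mmod \) and, being bijective, a diffeomorphism; composing with~\( \sigma \) gives \( \Mmod \cong \mu^{-1}(-U_\g+\varepsilon) \) (in fact \( G \)-equivariantly, with the action on~\( \Mmod \) coming from~\( G_\Le \)).

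The last two claims are short.  For non-isometry, use the circle case mentioned just before the theorem: for \( G = S^1 \) and \( M = \bR^4 \) flat one has \( U_\g = \bR^3 \), so \( \mu^{-1}(-U_\g+\varepsilon) = M \) is flat~\( \bR^4 \), whereas \( \Mmod \) is~\( \bR^4 \) with the Taub-NUT metric, which is not flat.  For completeness, if~\( M \) is complete then so is \( M \times X \), since \( X = T^*G_\bC \) carries Kronheimer's complete metric; the level set~\( N \) is a closed embedded submanifold of \( M \times X \) and hence complete in the induced metric; and \( \Mmod = N/G \) is the base of a Riemannian submersion with compact fibres (as~\( G \) is compact) from a complete total space, so~\( \Mmod \) is complete.
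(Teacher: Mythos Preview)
Your proof is correct and follows the same approach as the paper: the theorem there carries a \qed\ and no separate proof, being a summary of the discussion in \S\ref{sec:gauge}, which establishes that \( \psi \) is a trivial \( G_\Le \)-fibration over \( U_{\g} \) via exactly the \( T_0=0 \) section you use, that there are no special fibres (hence no collapsing in the double fibration picture), and invokes the Taub-NUT example for non-isometry. Your explicit verification that \( \Sigma \) is a global slice for the diagonal action (the bijectivity and transversality arguments) and your completeness argument via Riemannian submersion simply supply details the paper leaves implicit.
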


\begin{remark}
  Some analogous results may be obtained in hypersymplectic geometry.
  Hypersymplectic cuts for circle actions were introduced in
  \cite{Dancer-S:hs-survey} and some results in the hypersymplectic
  non-Abelian case have been found by T.~Matsoukas in his 2009 Oxford
  D.Phil.\ thesis \cite{Matsoukas:hypersymplectic}.
\end{remark}

\providecommand{\bysame}{\leavevmode\hbox to3em{\hrulefill}\thinspace}
\providecommand{\MR}{\relax\ifhmode\unskip\space\fi MR }
\providecommand{\MRhref}[2]{%
  \href{http://www.ams.org/mathscinet-getitem?mr=#1}{#2}
}
\providecommand{\href}[2]{#2}

\begin{small}
  \setlength{\parindent}{0pt} A.~S. Dancer:

  Jesus College, Oxford, OX1 3DW United Kingdom.

  \textit{E-mail}: \url{dancer@maths.ox.ac.uk}

  \medskip A.~F. Swann:

  Department of Mathematics and Computer Science, University of Southern
  Denmark, Campusvej 55, DK-5230 Odense M, Denmark

  \textit{and}

  CP\textsuperscript3-Origins, Centre of Excellence for Particle Physics
  Phenomenology, University of Southern Denmark, Campusvej 55, DK-5230
  Odense M, Denmark

  \textit{E-mail:} \url{swann@imada.sdu.dk}
\end{small}


\begin{thebibliography}{10}

\bibitem{Bernstein:matrix}
Dennis~S. Bernstein, \emph{Matrix mathematics}, second ed., Princeton
  University Press, Princeton, NJ, 2009.

\bibitem{Bielawski:tri-Hamiltonian}
R.~Bielawski, \emph{Complete hyper-{K\"a}hler {$4n$}-manifolds with a local
  tri-{H}amiltonian {$\mathbb R\sp n$}-action}, Math. Ann. \textbf{314} (1999),
  no.~3, 505--528.

\bibitem{Bielawski-Dancer:toric}
R.~Bielawski and A.~S. Dancer, \emph{The geometry and topology of toric
  hyperk\"ahler manifolds}, Comm. Anal. Geom. \textbf{8} (2000), no.~4,
  727--760.

\bibitem{Burns-GL:cuts}
D.~Burns, V.~Guillemin, and E.~Lerman, \emph{{K\"a}hler cuts}, December 2002,
  eprint \url{arXiv:math.DG/0212062}.

\bibitem{Dancer-Swann:compact-Lie}
A.~S. Dancer and A.~F. Swann, \emph{Hyper{K\"a}hler metrics associated to
  compact {L}ie groups}, Math. Proc. Camb. Phil. Soc. \textbf{120} (1996),
  61--69.

\bibitem{Dancer-Swann:geometry-qK}
\bysame, \emph{The geometry of singular quaternionic {K\"a}hler quotients},
  International J. Math. \textbf{8} (1997), 595--610.

\bibitem{Dancer-S:mod}
\bysame, \emph{Modifying hyperk{\"a}hler manifolds with circle symmetry}, Asian
  J. Math. \textbf{10} (2006), no.~4, 815--826.

\bibitem{Dancer-S:hs-survey}
\bysame, \emph{Hypersymplectic manifolds}, Recent developments in
  pseudo\bdash Riemannian geometry, ESI Lect. Math. Phys., Eur. Math. Soc.,
  Z\"urich, 2008, pp.~97--111.

\bibitem{Guillemin-S:symplectic}
V.~Guillemin and S.~Sternberg, \emph{Symplectic techniques in physics},
  Cambridge University Press, Cambridge, 1990.

\bibitem{Kronheimer:cotangent}
P.~B. Kronheimer, \emph{A hyper{K\"a}hler structure on the cotangent bundle of
  a complex {L}ie group}, preprint, MSRI, 1986, eprint
  \url{arXiv:math.DG/0409253}.

\bibitem{Lerman:cuts}
E.~Lerman, \emph{Symplectic cuts}, Math. Res. Lett. \textbf{2} (1995), no.~3,
  247--258.

\bibitem{Matsoukas:hypersymplectic}
T.~Matsoukas, \emph{Hypersymplectic quotients}, D. Phil., University of Oxford,
  2009.

\bibitem{Nakajima:Hilbert}
Hiraku Nakajima, \emph{Lectures on {H}ilbert schemes of points on surfaces},
  University Lecture Series, vol.~18, American Mathematical Society,
  Providence, RI, 1999.

\bibitem{Sjamaar-Lerman}
R.~Sjamaar and E.~Lerman, \emph{Stratified symplectic spaces and reduction},
  Ann. of Math. \textbf{134} (1991), 375--422.

\bibitem{Weitsman:cuts}
J.~Weitsman, \emph{Non-abelian symplectic cuts and the geometric quantization
  of noncompact manifolds}, Lett. Math. Phys. \textbf{56} (2001), no.~1,
  31--40, EuroConf\'erence Mosh\'e Flato 2000, Part I (Dijon).

\end{thebibliography}
\end{document}